\newcommand{\E}{\mathbb E}
\newcommand{\R}{\mathbb R}
\newcommand{\tr}{\mathrm{tr}}
\newcommand{\ds}{\displaystyle}
\newcommand{\manifold}[1]{\mathcal{#1}}
\newcommand{\M}{\manifold{M}}
\newcommand{\D}{\manifold{D}}
\newtheorem{theorem}{Theorem}[section]
\newtheorem{prop}[theorem]{Proposition}
\theoremstyle{definition}
\newtheorem{defn}[theorem]{Definition}
\theoremstyle{remark}
\newtheorem{rem}[theorem]{Remark}
\numberwithin{equation}{section}
\begin{document}

\title[Timelike Surfaces with Parallel Normalized Mean Curvature Vector]{Timelike Surfaces with Parallel Normalized Mean Curvature Vector Field in the Minkowski 4-Space}

\author{Victoria Bencheva, Velichka Milousheva}

\address{Institute of Mathematics and Informatics, Bulgarian Academy of Sciences,
Acad. G. Bonchev Str. bl. 8, 1113, Sofia, Bulgaria}
\email{viktoriq.bencheva@gmail.com}
\email{vmil@math.bas.bg}

\subjclass[2010]{Primary 53B30, Secondary 53A35, 53B25}
\keywords{parallel normalized mean curvature vector field, canonical parameters, Fundamental theorem}

\begin{abstract}

In the present paper, we study timelike surfaces with parallel normalized mean curvature vector field in the four-dimensional Minkowski space.  We introduce  special isotropic  parameters on each such surface, which we call canonical parameters, and prove a fundamental existence and uniqueness theorem stating that each timelike surface with parallel normalized mean curvature vector field is determined up
to a rigid motion in the Minkowski space by three geometric functions satisfying a system of three partial differential equations. In this way we minimize the number of functions and the number of partial
differential equations determining the surface, thus solving the Lund-Regge problem for this class of surfaces.

\end{abstract}

\maketitle

\section{Introduction}

In the local theory of surfaces both in Euclidean and pseudo-Euclidean spaces one of the basic problems is to find a minimal number of invariant functions, satisfying some natural conditions, that determine the surface up to a motion. This problem is known as the Lund-Regge problem \cite{Lund-Reg}. It is solved for minimal (or maximal) surfaces of co-dimension two in the Euclidean 4-space $\R^4$, the Minkoswki space $\R^4_1$ and the pseudo-Euclidean space $\R^4_2$. The surfaces with zero mean curvature in these spaces admit locally geometrically determined special isothermal parameters, called \textit{canonical},  such that the two main invariants (the Gaussian curvature and the normal curvature) of the surface satisfy a system of two partial differential equations called a \textit{system of natural PDEs}. The number of the invariant functions determining the surfaces and the number of
the differential equations are reduced to two. Moreover, the geometry of the corresponding zero mean curvature surface (minimal or maximal)  is determined  by the solutions of this system of natural PDEs. 

Special geometric parameters on minimal surfaces in $\R^4$ were introduced by T. Itoh  in \cite{Itoh}, and further, these parameters were used to prove that a minimal surface in $\R^4$ is determined up to a motion by two invariant functions satisfying a system of two PDEs \cite{Trib-Guad}.  Based on the canonical parameters, the system of natural PDEs was solved explicitly in terms
of two holomorphic functions  \cite{G-K-1}. The same problem was solved for maximal spacelike surfaces and minimal timelike surfaces in the Minkowksi space $\R^4_1$. Special isothermal parameters on maximal spacelike surfaces in $\R^4_1$ were 
introduced in \cite{Al-Pal} and it was proved that the local geometry of these surfaces is determined by
two invariant functions satisfying two PDEs. On the base of these canonical parameters the system of natural  PDEs of maximal spacelike surfaces was solved explicitly in \cite{G-K-2}. Minimal timelike surfaces in $\R^4_1$ were studied by G. Ganchev and the second author in \cite{G-M-IJM} and it was proved 
that they admit locally canonical parameters and their geometry is determined by two invariant functions, satisfying the following  system  of natural PDEs:
\begin{equation*}\label{Nat_Eq_K_kappa_R41-tl}
\begin{array}{lll}
\sqrt[4]{K^2+\varkappa^2\phantom{\big|}}\; \Delta^h \ln \sqrt[4]{K^2+\varkappa^2\phantom{\big|}}  &=& 2K; \\[1.5ex]
\sqrt[4]{K^2+\varkappa^2\phantom{\big|}}\; \Delta^h \arctan \ds\frac{\varkappa}{K} &=& 2\varkappa;
\end{array}  \qquad\quad K^2+\varkappa^2\neq 0,
\end{equation*} 
\noindent where $K$ is the Gaussian curvature,  $\varkappa$ is the curvature of the normal connection (the normal curvature), 
and $\Delta^h$ is the  hyperbolic Laplace operator.

Similar results were obtained for minimal Lorentz surfaces in the pseudo-Euclidean space with neutral metric 
 $\R^4_2$ in \cite{A-M-1}, \cite{G-K-3}, and \cite{K-M-1}. 

\vskip 2mm
So, the following natural question arises: \emph{How to introduce canonical parameters and obtain natural equations
for other classes of surfaces in 4-dimensional spaces?} 

This problem can be solved for the surfaces with parallel normalized mean curvature vector field -- another important class of surfaces both in Riemannian and pseudo-Riemannian geometry, since being a natural extension of the surfaces with parallel mean curvature vector field, they play an important role in Differential geometry and  Physics.

Surfaces with parallel normalized mean curvature vector field in the Euclidean 4-space $\R^4$ and spacelike surfaces with parallel normalized mean curvature vector field  in the Minkowski 4-space $\R^4_1$ are studied by G. Ganchev and the second author in \cite{G-M-Fil}. These classes of surfaces are described in terms of the so-called canonical parameters by three functions satisfying a system of three partial differential equations. Each surface with  parallel normalized mean curvature vector field in $\R^4$ is determined up to a motion by three functions $\lambda(u,v)$, $\mu(u,v)$ and $\nu(u,v)$ satisfying the following  system of partial differential equations 
\begin{equation*} \label{E:Eq0-1} 
\begin{array}{l}
\vspace{2mm}
\nu_u = \lambda_v - \lambda (\ln|\mu|)_v;\\
\vspace{2mm}
\nu_v = \lambda_u - \lambda (\ln|\mu|)_u;\\
\nu^2 - (\lambda^2 + \mu^2) = \frac{1}{2}|\mu| \Delta \ln |\mu|,
\end{array} 
\end{equation*}
where $\Delta$ denotes the Laplace operator.

The class of spacelike surfaces with parallel normalized mean curvature vector field in the Minkowski space $\R^4_1$ is described by three functions 
 $\lambda(u,v)$, $\mu(u,v)$ and $\nu(u,v)$ satisfying the following  system of partial differential equations 
\begin{equation*}  \label{E:Eq0-2}
\begin{array}{l}
\vspace{2mm}
\nu_u = \lambda_v - \lambda (\ln|\mu|)_v;\\
\vspace{2mm}
\nu_v = \lambda_u - \lambda (\ln|\mu|)_u;\\
\vspace{2mm}
\varepsilon(\nu^2 - \lambda^2 + \mu^2) = \frac{1}{2}|\mu| \Delta \ln |\mu|,
\end{array} 
\end{equation*}
where $\varepsilon = 1$ corresponds to the case the mean curvature vector field is spacelike, and $\varepsilon = - 1$ corresponds to the case  the mean curvature vector field is timelike.

\vskip 2mm
In the present paper, we focus our attention on the class of timelike surfaces with parallel normalized mean curvature vector field in the Minkowski 4-space $\R^4_1$. On each such surface we introduce 
special isotropic parameters $(u,v)$, which we call \textit{canonical}, that allow us to prove  the fundamental existence and uniqueness theorem in terms of three geometrically determined
functions. With respect to these parameters, the metric function and all invariants of the surface are expressed by these geometric functions.
The timelike surfaces with parallel normalized mean curvature vector field in $\R^4_1$ can be divided into three subclasses: 
\begin{itemize}
\item 
  surfaces satisfying $K - H^2 >0$;
\item 
  surfaces satisfying $K - H^2 <0$;
	\item 
 surfaces satisfying $K - H^2 =0$,
\end{itemize}
where $K$ is the Gauss curvature and $H$ is the mean curvature vector field. 

The timelike surfaces with parallel normalized mean curvature vector field in $\R^4_1$ for which $K - H^2 > 0$ are determined up to a rigid motion in $\R^4_1$ by three functions 
  $\lambda(u,v)$, $\mu(u,v)$, and $\nu(u,v)$ satisfying the following  system of partial differential equations: 
\begin{equation}  \label{E:Eq0-3}
\begin{array}{l}
\vspace{2mm}
\nu_u + \lambda_v = \lambda (\ln|\mu|)_v;\\
\vspace{2mm}
\lambda_u  - \nu_v = \lambda (\ln|\mu|)_u;\\
\vspace{2mm}
|\mu| (\ln |\mu|)_{uv} = -\nu^2 - (\lambda^2 + \mu^2).
\end{array} 
\end{equation}

The surfaces from the second subclass (characterized by the inequality $K - H^2 < 0$) are determined up to a rigid motion in $\R^4_1$ by three functions
$\lambda(u,v)$, $\mu(u,v)$, and $\nu(u,v)$ satisfying the system of PDEs: 
\begin{equation}  \label{E:Eq0-4}
\begin{array}{l}
\vspace{2mm}
\nu_u + \lambda_v = \lambda (\ln|\mu|)_v;\\
\vspace{2mm}
\lambda_u  + \nu_v = \lambda (\ln|\mu|)_u;\\
\vspace{2mm}
|\mu| (\ln |\mu|)_{uv} = -\nu^2 + (\lambda^2 + \mu^2).
\end{array} 
\end{equation}

 The surfaces from the third subclass (characterized by $K - H^2 =0$) are determined up to a rigid motion by three functions $\lambda(u,v)$, $\mu(u,v)$, and $\nu(u)$ 
satisfying:
\begin{equation}  \label{E:Eq0-5}
\begin{array}{l}
\vspace{2mm}
\nu_u + \lambda_v = \lambda (\ln|\mu|)_v;\\
\vspace{2mm}
|\mu| (\ln |\mu|)_{uv} = -\nu^2.
\end{array} 
\end{equation}

The above systems \eqref{E:Eq0-3},  \eqref{E:Eq0-4}, and \eqref{E:Eq0-5} are the background systems of natural partial differential equations describing
the three subclasses of timelike surfaces with parallel normalized mean curvature vector field in $\R^4_1$. 
In this way we solve the Lund-Regge problem for this class of surfaces in  $\R^4_1$.

\section{Preliminaries}

Let $\mathbb R^4_1$  be the four-dimensional Minkowski space  endowed with the metric
$\langle ., . \rangle$ of signature $(3,1)$.  The standard flat metric is given in local coordinates by
$dx_1^2 + dx_2^2 + dx_3^2 -dx_4^2.$

Let $\M = (\D , z)$ be a surface in $\R^4_1$, where $\D\subset\R^2$ and $z : \D \to \R^4_1$ is an immersion, i.e. 
$\M$ is locally parametrized by $\M: z = z(u,v), \, \, (u,v) \in {\mathcal D}$. The surface 
$\M$ is said to be
\emph{spacelike} (resp. \emph{timelike}), if $\langle ., . \rangle$ induces  a Riemannian (resp. Lorentzian) 
metric $g$ on $\M$. 

We use the notations $\widetilde{\nabla}$ and $\nabla$ for the Levi Civita connections on $\mathbb R^4_1$ and $\M$, respectively.
Thus, if $x$ and $y$ are vector fields tangent to $\M$ and $\xi$ is a normal vector field, then we have the following formulas of Gauss and Weingarten:
$$\begin{array}{l}
\vspace{2mm}
\widetilde{\nabla}_xy = \nabla_xy + \sigma(x,y);\\
\vspace{2mm}
\widetilde{\nabla}_x \xi = - A_{\xi} x + D_x \xi,
\end{array}$$
which define the second fundamental tensor $\sigma$, the normal connection $D$
and the shape operator $A_{\xi}$ with respect to $\xi$. In general, $A_{\xi}$ is not diagonalizable.

The mean curvature vector  field $H$ of $\M$ is defined as
$$H = \ds{\frac{1}{2}\,  \tr\, \sigma}.$$ 
A surface $\M$ is called \textit{totally geodesic} if its
second fundamental form vanishes identically. The surface is called \textit{minimal} if its
mean curvature vector vanishes identically, i.e.  $H=0$.

A normal vector field $\xi$ on a surface $\M$ is called \emph{parallel in the normal bundle} (or simply \emph{parallel}) if $D{\xi}=0$  \cite{Chen}.
The surface $\M$ is said to have \emph{parallel mean curvature vector field} if its mean curvature vector $H$ is parallel, i.e.
$D H =0$.  In the early 1970s, the surfaces with parallel mean curvature vector field in Riemannian space forms were classified by B.-Y. Chen \cite{Chen1} and S. Yau  \cite{Yau}. In 2009, B.-Y. Chen  classified spacelike surfaces with parallel mean
curvature vector field  in pseudo-Euclidean spaces with arbitrary codimension and later, Lorentz surfaces with parallel mean curvature vector field in  arbitrary pseudo-Euclidean space $\R^m_s$ were studied in \cite{Chen-KJM} and \cite{Fu-Hou}. Some classical and recent results on submanifolds with parallel mean curvature vector in Riemannian manifolds
as well as in pseudo-Riemannian manifolds are presented in the survey \cite{Chen-survey}.

The class of surfaces with parallel mean curvature vector field is naturally extended to the class of surfaces with parallel
normalized mean curvature vector field as follows: a surface  is said to have \textit{parallel normalized mean curvature vector field} if 
 $H$ is non-zero and  there exists a unit vector field in the direction of $H$ 
which is parallel in the normal bundle \cite{Chen-MM}.  
It is proved that every analytic surface with parallel normalized mean curvature
vector  in the Euclidean $m$-space $\mathbb{R}^{m}$ must either lie in a
4-dimensional space $\mathbb{R}^{4}$ or in a hypersphere of $\mathbb{R}^{m}$
as a minimal surface \cite{Chen-MM}.

Complete classification of biconservative surfaces with parallel normalized mean curvature
vector field in $\R^4$ is given in \cite{Sen-Turg-JMAA} and biconservative $m$-dimensional submanifolds with parallel normalized mean curvature
vector field in $\R^{n+2}
$ are studied in \cite{Sen}. Recently, 3-dimensional biconservative and biharmonic submanifolds with parallel normalized mean curvature
vector field in the Euclidean 5-space $\R^5$ have been studied in \cite{Sen-Turg-TJM}.

\vskip 2mm
Let $\M: z=z(u,v), \,\, (u,v) \in \mathcal{D}$ $(\mathcal{D} \subset \mathbb R^2)$  be a local parametrization on a
timelike surface in $\mathbb R^4_1$.
The tangent space $T_p\M$ at an arbitrary point $p=z(u,v)$ of $\M$ is  spanned by $z_u$ and $z_v$. We use the standard denotations
$E(u,v)=\langle z_u,z_u \rangle, \; F(u,v)=\langle z_u,z_v
\rangle, \; G(u,v)=\langle z_v,z_v \rangle$ for the coefficients
of the first fundamental form and denote $W=\sqrt{|EG-F^2|}$. Without loss of generality we assume that $E<0$ and $G>0$.
Choosing an orthonormal frame field $\{n_1, n_2\}$ of the normal bundle, i.e. $\langle
n_1, n_1 \rangle =1$, $\langle n_2, n_2 \rangle = 1$, $\langle n_1, n_2 \rangle = 0$, we can write the following derivative formulas:
\begin{equation}\label{E:Eq-1}
\begin{array}{l}
\vspace{2mm} \widetilde{\nabla}_{z_u}z_u=z_{uu} = - \Gamma_{11}^1 \, z_u +
\Gamma_{11}^2 \, z_v + c_{11}^1\, n_1 + c_{11}^2\, n_2;\\
\vspace{2mm} \widetilde{\nabla}_{z_u}z_v=z_{uv} = - \Gamma_{12}^1 \, z_u +
\Gamma_{12}^2 \, z_v + c_{12}^1\, n_1 + c_{12}^2\, n_2;\\
\vspace{2mm} \widetilde{\nabla}_{z_v}z_v=z_{vv} = - \Gamma_{22}^1 \, z_u +
\Gamma_{22}^2 \, z_v + c_{22}^1\, n_1 + c_{22}^2\, n_2;\\
\end{array} 
\end{equation}
where $\Gamma_{ij}^k$ are the Christoffel's symbols and the functions $c_{ij}^k, \,\, i,j,k = 1,2$  are defined by
$$\begin{array}{lll}
\vspace{2mm}
c_{11}^1 = \langle z_{uu}, n_1 \rangle; & \qquad  c_{12}^1 = \langle z_{uv}, n_1 \rangle; &  \qquad  c_{22}^1 = \langle z_{vv}, n_1 \rangle; \\
\vspace{2mm}
c_{11}^2 = \langle z_{uu}, n_2 \rangle; & \qquad  c_{12}^2 = \langle z_{uv}, n_2 \rangle;& \qquad c_{22}^2 = \langle z_{vv}, n_2 \rangle.
\end{array}$$

It is obvious, that  the surface $\M$ lies in a two-dimensional plane if and only if
it is totally geodesic, i.e. $c_{ij}^k=0$ for all $i,j,k = 1, 2.$ 
So, further we assume that at least one of the coefficients $c_{ij}^k$ is not
zero.

Let us consider the following determinants:
$$
\Delta_1 = \left\vert%
\begin{array}{cc}
\vspace{2mm}
  c_{11}^1 & c_{12}^1 \\
  c_{11}^2 & c_{12}^2 \\
\end{array}%
\right\vert, \quad
\Delta_2 = \left\vert%
\begin{array}{cc}
\vspace{2mm}
  c_{11}^1 & c_{22}^1 \\
  c_{11}^2 & c_{22}^2 \\
\end{array}%
\right\vert, \quad
\Delta_3 = \left\vert%
\begin{array}{cc}
\vspace{2mm}
  c_{12}^1 & c_{22}^1 \\
  c_{12}^2 & c_{22}^2 \\
\end{array}%
\right\vert.
$$
At a given point $p \in M^2$, the \textit{first normal space} of $\M$  in $\R^4_1$, denoted by
$\rm{Im} \, \sigma_p$, is the subspace given by
$${\rm Im} \, \sigma_p = {\rm span} \{\sigma(x, y): x, y \in T_p \M \}.$$

It is obvious, that the condition $\Delta_1 = \Delta_2 = \Delta_3 = 0$  characterizes points at which
the first normal space  ${\rm Im} \, \sigma_p$ is one-dimensional. Such points are called \textit{flat} or \textit{inflection} points of the surface \cite{Lane, Little}.
E. Lane  proved in \cite{Lane}, that  every point of a surface in  a 4-dimensional affine space is an inflection point
if and only if the surface is developable or lies in a 3-dimensional space.
So, further we consider timelike surfaces in $\R^4_1$ that are free of inflection  points, i.e. we assume that $(\Delta_1, \Delta_2, \Delta_3) \neq (0,0,0)$.

\section{Canonical parameters on timelike surfaces with parallel normalized mean curvature vector field}

For a timelike surface $\M$ in $\R^4_1$, locally thete exists a coordinate system  $(u,v)$ such that  the metric tensor $g$ of $\M$ has the form \cite{Lar}:
\begin{equation*} \label{E:Eq-g}
g= - f^2(u, v)(du\otimes dv + dv\otimes du)
\end{equation*}
for some positive function $f(u, v)$. Let $z=z(u, v), (u, v) \in \mathcal{D} \, 
 (\mathcal{D} \subset \R^2)$ be such a local parametrization on $\M$. 
Then, the coefficients of the first fundamental form are
\begin{equation*}
E = \langle z_u, z_u \rangle = 0; \quad F = \langle z_u, z_v \rangle = - f^2(u, v); \quad G = \langle z_v, z_v \rangle = 0.
\end{equation*}
We consider the pseudo-orthonormal tangent frame field  given by $x=\ds{\frac{z_u}{f}}$, $y=\ds{\frac{z_v}{f}}$. Obviously,
 $\langle x, x \rangle = 0$,  $\langle x, y \rangle = -1$, $\langle y, y \rangle = 0$. Then, the mean curvature vector field $H$ of $\M$ is given by 
$$H = - \sigma(x, y).$$

In the case $H \neq 0$ (non-minimal surface), we can choose a unit normal vector field $n_1$ which is collinear with the mean curvature vector field $H$, i.e. $H = \nu n_1$  for a smooth function $\nu = || H ||$. Then, $\sigma (x,y) = - \nu n_1$.
We choose a unit normal vector field $n_2$ such that $\{n_1, n_2\}$ is an orthonormal frame field of the normal bundle ($n_2$ is determined up to orientation). Then we have the following formulas:
\begin{equation*}
\begin{array}{l} \label{E:Eq-3}
\vspace{2mm}
\sigma (x,x) = \lambda_1 n_1 + \mu_1 n_2; \\
\vspace{2mm}
\sigma (x,y) = -\nu n_1; \\
\vspace{2mm}
\sigma (y,y) = \lambda_2 n_1 + \mu_2 n_2,
\end{array}
\end{equation*} 
where $\nu \neq 0$, $\lambda_1, \mu_1, \lambda_2, \mu_2$ are  smooth functions determined by:
$$
\begin{array}{l}
\vspace{2mm}
\lambda_1 = \langle \widetilde{\nabla}_x x, n_1 \rangle; \qquad  \mu_1 = \langle \widetilde{\nabla}_x x, n_2 \rangle; \\
\vspace{2mm}
\lambda_2 = \langle \widetilde{\nabla}_y y, n_1 \rangle; \qquad  \mu_2 = \langle \widetilde{\nabla}_y y, n_2 \rangle.  
\end{array}
$$
Using that $\langle z_u, z_u \rangle = 0, \; \langle z_u, z_v \rangle = -f^2(u,v),  \; \langle z_v, z_v \rangle = 0$,
after differentiation we calculate the coefficients $\Gamma_{ij}^k$, \, $i,j,k = 1,2$:
\begin{equation}
\begin{array}{ll} \label{E:Eq-4}
\vspace{2mm}
\Gamma_{11}^1 = \frac{2f_u}{f}; & \qquad \Gamma_{11}^2 = 0;\\
\vspace{2mm}
\Gamma_{12}^1 = 0; & \qquad  \Gamma_{12}^2 = 0;\\
\vspace{2mm}
\Gamma_{22}^1 = 0; & \qquad \Gamma_{22}^2 = \frac{2f_v}{f}.
\end{array}
\end{equation}
Having in mind that $x=\ds{\frac{z_u}{f}}$, $y=\ds{\frac{z_v}{f}}$, from \eqref{E:Eq-1} and \eqref{E:Eq-4}, after calculations we obtain:
\begin{equation} 
\begin{array}{l} \label{E:Eq-5}
\vspace{2mm}
\nabla _x x = \frac{f_u}{f^2} \,x  \\
\vspace{2mm}
\nabla _x y = \quad\qquad -\frac{f_u}{f^2} \,y \\
\vspace{2mm}
\nabla _y x = -\frac{f_v}{f^2} \,x \\
\vspace{2mm}
\nabla _y y = \quad\qquad\,\,\, \frac{f_v}{f^2} \,y 
\end{array}
\end{equation}

We denote $\gamma_1 = \frac{f_u}{f^2} = x(\ln f)$, $\gamma_2 = \frac{f_v}{f^2} = y(\ln f)$. So, using equalities \eqref{E:Eq-1} and 
\eqref{E:Eq-5} we obtain the followimg derivative formulas:
\begin{equation}\label{E:DerivFormIsotr}
\begin{array}{l}
\vspace{2mm}
\widetilde{\nabla} _x x = \gamma_1 x \qquad\quad + \lambda_1 n_1 + \mu_1 n_2 \\
\vspace{2mm}
\widetilde{\nabla}_x y = \quad\quad -\gamma_1 y -\nu n_1 \\
\vspace{2mm}
\widetilde{\nabla}_y x = -\gamma_2 x \quad\quad -\nu n_1 \\
\vspace{2mm}
\widetilde{\nabla}_y y = \quad\quad\,\,\, \gamma_2 y \, \, +  \lambda_2 n_1 + \mu_2 n_2
\end{array}
\end{equation}

\begin{rem} 
The pseudo-orthonormal frame field $\{x,y,n_1,n_2\}$ is geometrically determined: $x,y$ are the two lightlike directions in the tangent space; $n_1$ is the unit normal vector field collinear with the mean curvature vector field $H$; $n_2$ is determined by the condition that   $\{n_1, n_2\}$ is an orthonormal frame field of the normal bundle ($n_2$ is determined up to a sign). We call this  pseudo-orthonormal frame field $\{x,y,n_1,n_2\}$ a \textit{geometric frame field} of the surface.
\end{rem}

Using \eqref{E:DerivFormIsotr}, we can easily derive the following derivative formulas for the normal frame field $\{n_1, n_2\}$:
\begin{equation}\label{E:DerivNormFormIsotr}
\begin{array}{l}
\vspace{2mm}
\widetilde{\nabla}_x n_1 = -\nu x  + \lambda_1 y \quad\quad + \beta_1 n_2 \\
\vspace{2mm}
\widetilde{\nabla}_y n_1 = \lambda_2 x -\nu y \quad\quad\,\,\,\,\, + \beta_2 n_2 \\
\vspace{2mm}
\widetilde{\nabla}_x n_2 = \quad\quad + \mu_1 y  -\beta_1 n_1 \\
\vspace{2mm}
\widetilde{\nabla}_y n_2 = \mu_2 x \quad\quad\,\,\,\, -  \beta_2 n_1
\end{array}
\end{equation}
where $\beta_1 = \langle \widetilde{\nabla}_x n_1 , n_2 \rangle$ and  $\beta_2 = \langle \widetilde{\nabla}_y n_1 , n_2 \rangle$. Formulas
\eqref{E:DerivFormIsotr} and \eqref{E:DerivNormFormIsotr} are the derivative formulas of the surface with respect to the pseudo-orthonormal frame field $\{x,y,n_1,n_2\}$ which is geometrically determined as explained above. 

The geometric meaning of the functions $\beta_1$ and  $\beta_2$ is revealed by the next two propositions.

\begin{prop} \label{P:parallel H}
Let $\M$ be   a timelike surface in the Minkowski space $\R^4_1$. Then, $\M$ has parallel mean curvature vector field if and only if $\beta_1 = \beta _2 =0$ and $\nu =const$.
\end{prop}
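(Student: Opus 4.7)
The plan is to compute the normal covariant derivatives $D_x H$ and $D_y H$ directly from the derivative formulas \eqref{E:DerivNormFormIsotr}, and then read off the equivalence.

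First, since $H = \nu n_1$ with $\nu \neq 0$ by the non-minimality assumption built into the construction of the geometric frame, parallelism of $H$ in the normal bundle is, by definition, the pair of conditions $D_x H = 0$ and $D_y H = 0$. From the Weingarten-type formulas \eqref{E:DerivNormFormIsotr} we identify the normal components of $\widetilde{\nabla}_x n_1$ and $\widetilde{\nabla}_y n_1$, obtaining $D_x n_1 = \beta_1 n_2$ and $D_y n_1 = \beta_2 n_2$. Applying the Leibniz rule for $D$,
\begin{equation*}
D_x H = x(\nu)\, n_1 + \nu\, \beta_1\, n_2, \qquad D_y H = y(\nu)\, n_1 + \nu\, \beta_2\, n_2.
\end{equation*}

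Next I would extract the equivalence. Since $\{n_1, n_2\}$ is an orthonormal frame of the normal bundle, $D H \equiv 0$ holds if and only if all four scalar coefficients above vanish, i.e.
\begin{equation*}
x(\nu) = 0, \quad y(\nu) = 0, \quad \nu \beta_1 = 0, \quad \nu \beta_2 = 0.
\end{equation*}
The first two equations say that $\nu$ is annihilated by the tangent frame $\{x,y\}$, hence $\nu$ is constant on $\M$. Because $\nu \neq 0$ everywhere, the last two equations reduce to $\beta_1 = \beta_2 = 0$. Conversely, if $\nu$ is constant and $\beta_1 = \beta_2 = 0$, then the displayed expressions for $D_x H$ and $D_y H$ vanish identically, so $H$ is parallel in the normal bundle.

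No serious obstacle is expected; the statement is essentially a direct translation of parallelism of $H$ through the adapted frame $\{n_1, n_2\}$. The only mild point to be careful about is keeping track of the fact that $n_1$ is the normalized direction of $H$ (so the normal part of $\widetilde{\nabla}_X n_1$ is automatically a multiple of $n_2$, giving the clean formulas $D_X n_1 = \beta_X n_2$), and that $\nu \neq 0$ so that one may divide out to conclude $\beta_1 = \beta_2 = 0$ rather than merely $\nu\beta_i=0$.
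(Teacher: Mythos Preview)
Your proof is correct and follows essentially the same approach as the paper: both compute $D_x H = x(\nu)\, n_1 + \nu\beta_1\, n_2$ and $D_y H = y(\nu)\, n_1 + \nu\beta_2\, n_2$ from the derivative formulas and read off the equivalence. Your version is slightly more explicit about the converse direction and about using $\nu \neq 0$ to pass from $\nu\beta_i = 0$ to $\beta_i = 0$, but the argument is the same.
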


\begin{proof}
Let $\M$ be a timelike surface in $\R^4_1$  with geometric pseudo-orthonormal frame field $\{x,y,n_1,n_2\}$.  
It follows from \eqref{E:DerivNormFormIsotr} that for the normal mean curvature vector field $H = \nu n_1$ we have the formulas:
$$
\begin{array}{l} 
\vspace{2mm}
D_x H = x(\nu) n_1 + \nu \beta_1 n_2; \\ 
\vspace{2mm}
D_y H = y(\nu) n_1 + \nu \beta_2 n_2, 
\end{array}
$$
which imply that $H$ is parallel in the normal bundle if and only if $\beta_1 = \beta _2 =0$ and $\nu =const$.
\end{proof}

\begin{prop} \label{P:parallel n1}
Let $\M$ be   a timelike surface in the Minkowski space $\R^4_1$. Then, $\M$ has parallel normalized mean curvature vector field if and only if $\beta_1 = \beta _2 =0$ and $\nu \neq const$.
\end{prop}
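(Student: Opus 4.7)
The plan is to imitate the proof of Proposition \ref{P:parallel H} verbatim, with the unit direction of $H$ in place of $H$ itself. Since $n_1$ was chosen to be collinear with $H$ and $\nu = \|H\|$ is nowhere zero, the unique (up to sign) unit vector field in the direction of $H$ is $n_1$. Therefore $\M$ having parallel normalized mean curvature vector field is equivalent to $D n_1 = 0$.

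Next I would extract the normal components of $\widetilde{\nabla}_x n_1$ and $\widetilde{\nabla}_y n_1$ from the derivative formulas \eqref{E:DerivNormFormIsotr}, which give
\begin{equation*}
D_x n_1 = \beta_1 n_2, \qquad D_y n_1 = \beta_2 n_2.
\end{equation*}
Thus $D n_1 = 0$ if and only if $\beta_1 = \beta_2 = 0$, independently of the behaviour of $\nu$. This is the entire analytic content of the proposition; no use of the Gauss/Codazzi equations or of the specific isotropic form of the metric is needed here.

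To recover the statement in its precise form, I would then compare the condition just obtained with Proposition \ref{P:parallel H}. The combination $\beta_1 = \beta_2 = 0$ together with $\nu = \text{const}$ is exactly the case in which $H = \nu n_1$ is itself parallel in the normal bundle, which is the regime already covered by the previous proposition. In this paper the class of surfaces with parallel normalized mean curvature vector field is treated as complementary to the parallel mean curvature vector class, so one restricts to $\nu$ non-constant. Combining these two observations yields the equivalence as stated.

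The only real subtlety — and hence the sole reason the statement insists on $\nu \neq \text{const}$ rather than just $\beta_1 = \beta_2 = 0$ — is this convention of disjointness with the parallel mean curvature case of Proposition \ref{P:parallel H}. It is a matter of wording rather than computation, and so I do not anticipate any substantive obstacle.
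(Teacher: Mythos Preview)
Your proposal is correct and follows essentially the same approach as the paper: both identify $n_1$ as the unit direction of $H$, read off $D_x n_1 = \beta_1 n_2$ and $D_y n_1 = \beta_2 n_2$ from \eqref{E:DerivNormFormIsotr}, and then invoke the disjointness convention with the parallel mean curvature case to obtain $\nu \neq \text{const}$. If anything, you spell out the role of the $\nu \neq \text{const}$ clause more explicitly than the paper does.
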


\begin{proof}
Recall that $\M$  is  a surface with parallel normalized mean curvature vector field if 
 $H$ is non-zero (and non-parallel) and  there exists a unit vector field in the direction of $H$ 
which is parallel in the normal bundle. Since $n_1$ is collinear with $H$ and 
$$\begin{array}{l}
\vspace{2mm}
D_x n_1 = \beta_1 n_2; \\ 
\vspace{2mm}
D_y n_1 = \beta_2 n_2, 
\end{array}
$$
we conclude that $\M$  is  a surface with parallel normalized mean curvature vector field if and only if $\beta_1 = \beta _2 =0$ and $\nu \neq const$.
\end{proof}

\vskip 2mm
Further, we consider timelike surfaces with parallel normalized mean curvature vector field, i.e.  we assume that $\beta_1 = \beta _2 =0$ and $\nu \neq const$. For this class of surfaces we will introduce special, so-called canonical parameters, which we will prove to exist locally on each such surface. 

Using that $\beta_1 = \beta _2 =0$, from \eqref{E:DerivFormIsotr} and \eqref{E:DerivNormFormIsotr} we derive the following derivative formulas for the class of surfaces with parallel normalized mean curvature vector field:
\begin{equation}\label{E:DerivForm-MNMCVF}
\begin{array}{ll}
\vspace{2mm}
\widetilde{\nabla} _x x = \gamma_1 x \qquad\quad + \lambda_1 n_1 + \mu_1 n_2;  & \qquad \quad  \widetilde{\nabla}_x n_1 = -\nu x  + \lambda_1 y; \\
\vspace{2mm}
\widetilde{\nabla}_x y = \quad\quad -\gamma_1 y -\nu n_1; & \qquad \quad \widetilde{\nabla}_y n_1 = \lambda_2 x -\nu y;\\
\vspace{2mm}
\widetilde{\nabla}_y x = -\gamma_2 x \quad\quad -\nu n_1; & \qquad \quad \widetilde{\nabla}_x n_2 = \quad\quad + \mu_1 y;\\
\vspace{2mm}
\widetilde{\nabla}_y y = \quad\quad\,\,\, \gamma_2 y \, \, +  \lambda_2 n_1 + \mu_2 n_2; & \qquad \quad \widetilde{\nabla}_y n_2 = \mu_2 x.
\end{array}
\end{equation}

Further, we calculate the integrability conditions for this class of surfaces. Since the Levi Civita  connection $\widetilde{\nabla}$ of $\R^4_1$ is flat, we have 
\begin{equation} \label{E:R'}
\widetilde{R}(x,y,x) = 0; \quad \widetilde{R}(x,y,y) = 0; \quad \widetilde{R}(x,y,n_1) = 0; \quad \widetilde{R}(x,y,n_2) = 0,
\end{equation}
where 
$$\widetilde{R}(x,y,z) = \widetilde{\nabla}_x \widetilde{\nabla}_y z  - \widetilde{\nabla}_y \widetilde{\nabla}_x z - \widetilde{\nabla}_{[x,y]} z$$ 
 for arbitrary vector fields $x, y, z$. 
It follows from \eqref{E:DerivForm-MNMCVF} that the commutator $[x,y]$ is expressed as follows 
$$
[x,y] = \widetilde{\nabla}_x y - \widetilde{\nabla}_y x = \gamma_2 x - \gamma_1 y.
$$
Then, by use of formulas \eqref{E:DerivForm-MNMCVF}, we calculate:
\begin{equation} \label{E:R'-cal}
\begin{array}{ll}
\widetilde{R}(x,y,x) = & \left(-x(\gamma_2)-y(\gamma_1) - 2 \gamma_1 \gamma_2 + \nu^2 - \lambda_1 \lambda_2 - \mu_1 \mu_2\right) \,x - \\
\vspace{2mm}
& - \left(x(\nu)+y(\lambda_1) + 2 \gamma_2 \lambda_1 \right) \,n_1 - \left(y(\mu_1) + 2\gamma_2 \mu_1 \right) \,n_2; \\

\widetilde{R}(x,y,y) = & \left(x(\gamma_2)+y(\gamma_1) + 2 \gamma_1 \gamma_2 - \nu^2 + \lambda_1 \lambda_2 + \mu_1 \mu_2\right) \,y + \\
\vspace{2mm}
&  + \left(x(\lambda_2)+y(\nu) + 2 \gamma_1 \lambda_2 \right) \,n_1 + \left(x(\mu_2)+ 2\gamma_1 \mu_2 \right) \,n_2;\\

\widetilde{R}(x,y,n_1) = & \left(x(\lambda_2)+y(\nu) + 2 \gamma_1 \lambda_2 \right) \,x - \left(x(\nu)+y(\lambda_1) + 2 \gamma_2 \lambda_1 \right) \,y + \\
\vspace{2mm}
& + \left(\mu_1 \lambda_2 - \lambda_1 \mu_2 \right) \,n_2;\\

\widetilde{R}(x,y,n_2) = & \left(x(\mu_2) + 2\gamma_1 \mu_2 \right) \,x - \left(y(\mu_1) + 2\gamma_2 \mu_1 \right) \,y + \\
\vspace{2mm}
& + \left(\lambda_1 \mu_2 - \mu_1 \lambda_2 \right) \,n_1.

\end{array}
\end{equation}

Now, taking into consideration \eqref{E:R'} and \eqref{E:R'-cal}, we obtain the following integrability conditions:
\begin{equation}\label{E:integrCondIsotr1}
\begin{array}{l}
\vspace{2mm}
x(\lambda_2)+y(\nu) + 2 \gamma_1 \lambda_2=0; \\
\vspace{2mm}
x(\nu)+y(\lambda_1) + 2 \gamma_2 \lambda_1=0; \\
\vspace{2mm}
x(\mu_2) + 2\gamma_1 \mu_2=0; \\
\vspace{2mm}
y(\mu_1) + 2\gamma_2 \mu_1=0; \\
\vspace{2mm}
x(\gamma_2)+y(\gamma_1) + 2 \gamma_1 \gamma_2 - \nu^2 + \lambda_1 \lambda_2 + \mu_1 \mu_2=0 ;\\
\vspace{2mm}
\mu_1 \lambda_2-\lambda_1 \mu_2 =0. 
\end{array}
\end{equation}

\begin{rem} 
If we assume that both $\mu_1$ and $\mu_2$ are zero functions, i.e. $\mu_1(u,v)= 0$ and  $\mu_2(u,v)= 0$ for all $(u, v) \in \mathcal{D}$, then from \eqref{E:DerivForm-MNMCVF} we obtain that $\Delta_1 = \Delta_2 = \Delta_3 = 0$, which means that the surface consists of inflection points. Moreover, from $\widetilde{\nabla}_x n_2 = 0$ and $\widetilde{\nabla}_y n_2 = 0$, we get that the normal vector field $n_2$ is constant, which implies that the surface $\M$ lies in the three-dimensional Minkowski space $\R^3_1 = {\rm span} \{x,y,n_1\}$.
\end{rem} 

So, further we assume that $\mu_1^2 + \mu_2^2 \neq 0$ at least  in a sub-domain $\mathcal{D}_0$ of $\mathcal{D}$.
Without loss of generality we may assume that $\mu_1 \neq 0$. Then, from the last equality of \eqref{E:integrCondIsotr1} we obtain that
$\mu_1 \lambda_2=\lambda_1 \mu_2$, which implies $\lambda_2=\frac{\mu_2}{\mu_1} \lambda_1$. 

The Gauss curvature of the surface is defined by the following formula: 
$$K = \ds \frac{\langle R(x,y,y), x \rangle}{\langle x,x \rangle \langle y,y \rangle - \langle x,y \rangle^2}.$$
Now, using that $R(x,y,y) = \nabla_x \nabla_y y  - \nabla_y \nabla_x y - \nabla_{[x,y]} y$, from formulas \eqref{E:DerivForm-MNMCVF} we obtain 
$$R(x,y,y) = \left(x(\gamma_2)+y(\gamma_1) + 2 \gamma_1 \gamma_2\right)\, y,$$
and hence, the Gauss curvature $K$ is given by
$$K = x(\gamma_2)+y(\gamma_1) + 2 \gamma_1 \gamma_2.$$
Having in mind the fifth equality of \eqref{E:DerivForm-MNMCVF}, we obtain that the Gauss curvature of the surface $\M$ is expressed in terms of the functions $\nu$, $\lambda_1$, $\lambda_2$, $\mu_1$, $\mu_2$ as follows:
$$ K = \nu^2 - \lambda_1 \lambda_2 - \mu_1 \mu_2. $$

The last equality together with $\nu^2 = H^2$ (for simplicity we denote $H^2 = \langle H, H \rangle$) implies that 
$K - H^2 = -(\lambda_1 \lambda_2 + \mu_1 \mu_2).$
Using that  $\lambda_2=\frac{\mu_2}{\mu_1} \lambda_1$, we get 
$$K - H^2 = - \frac{\mu_2}{\mu_1}(\lambda_1^2 + \mu_1^2).$$
Hence, the surfaces with parallel normalized mean curvature vector field can be divided into two main classes:
\begin{itemize}
\item
$K - H^2 \neq 0$ (which is equivalent to $\mu_1 \mu_2 \neq 0$) in a sub-domain; 
\item
$K - H^2 = 0$ (which is equivalent to $\mu_1 \mu_2 = 0$) in a sub-domain.
\end{itemize}

\vskip 3mm
\subsection{Surfaces satisfying   $K - H^2 \neq 0$}.

\vskip 2mm
First we shall consider the case $K - H^2 \neq 0$, i.e. $\mu_1 \mu_2 \neq 0$. 
In this case, from the third and forth equalities of \eqref{E:integrCondIsotr1} we get:
$$\begin{array}{l}
\vspace{2mm}
 x(\ln |\mu_2|) = -2\gamma_1; \\ 
\vspace{2mm}
y(\ln |\mu_1|) = - 2\gamma_2.
\end{array} $$
On the other hand,  the functions $\gamma_1$ and $\gamma_2$ are expressed by the metric function $f$ as follows: 
$\gamma_1 = x(\ln f)$, $\gamma_2 =  y(\ln f)$. Hence, we obtain:
\begin{equation} \label{E:Eq101}
\begin{array}{l}
\vspace{2mm}
x(\ln f^2 |\mu_2|) = 0; \\
\vspace{2mm}
y(\ln f^2 |\mu_1|) = 0.
\end{array} 
\end{equation}
It follows from \eqref{E:Eq101} that 
the function  $f^2 |\mu_1|$ depends only on the parameter $u$, and the function $f^2 |\mu_2|$ depends only on $v$. Therefore, there exist smooth functions $\varphi (u) > 0$ and $\psi (v) > 0$ such that:
$$
f^2 |\mu_1| = \varphi (u); \quad f^2 |\mu_2| = \psi (v).
$$

We consider the following change of the parameters:
$$
\begin{array}{l}
\vspace{2mm}
\overline{u} = \int_{u_0}^u{\sqrt{\varphi (u)}}\, du + \overline{u}_0, \quad \overline{u}_0 = const\\
\vspace{2mm}
\overline{v} = \int_{v_0}^v{\sqrt{\psi (v)}}\, dv + \overline{v}_0, \quad \overline{v}_0 = const\\
\end{array} 
$$
Under this change of the parameters we obtain: 
$$
z_{\overline{u}} = \frac{z_u}{\sqrt{\varphi(u)}} = \frac{z_u}{f \sqrt{|\mu_1|}},
$$
$$
z_{\overline{v}} = \frac{z_v}{\sqrt{\psi(v)}} = \frac{z_v}{f \sqrt{|\mu_2|}},
$$
which imply that
$$
\langle z_{\overline{u}}, z_{\overline{u}} \rangle =0;  \quad
\langle z_{\overline{u}}, z_{\overline{v}} \rangle = - \frac{1}{\sqrt{|\mu_1||\mu_2|}}, \quad \langle z_{\overline{v}}, z_{\overline{v}} \rangle =0.
$$
Therefore, $(\overline{u}, \overline{v})$ are special isotropic parameters with respect to which the metric tensor of the surface is given by
$$
g = -\overline{f}^2 (\overline{u}, \overline{v}) (d\overline{u}\otimes d\overline{v} + d\overline{v}\otimes d\overline{u}),
$$
where the metric function $\overline{f}$ is expressed in terms of $\mu_1$ and $\mu_2$ as follows:
$$
\overline{f}(\overline{u}, \overline{v}) = \frac{1}{\sqrt[4]{|\mu_1||\mu_2|}}.
$$
With respect to the isotropic directions:
$$
\begin{array}{l}
\vspace{2mm}
\overline{x} = \frac{z_{\overline{u}}}{\overline{f}} =  \frac{\sqrt[4]{|\mu_1||\mu_2|}}{\sqrt{|\mu_1|}} x, \\
\vspace{2mm}
\overline{y} = \frac{z_{\overline{v}}}{\overline{f}} =  \frac{\sqrt[4]{|\mu_1||\mu_2|}}{\sqrt{|\mu_2|}} y, \\
\end{array}
$$
we have the flowing expressions for the second fundamental tensor $\sigma$:
\begin{equation*}
\begin{array}{l} \label{E:Eq102}
\vspace{2mm}
\sigma (\overline{x}, \overline{x}) = \frac{\sqrt{|\mu_1||\mu_2|}}{|\mu_1|} \,\sigma (x,x) = \lambda_1 \frac{\sqrt{|\mu_1||\mu_2|}}{|\mu_1|} \, n_1 + \mu_1 \frac{\sqrt{|\mu_1||\mu_2|}}{|\mu_1|} \, n_2; \\
\vspace{2mm}
\sigma (\overline{x}, \overline{y}) = \sigma (x,y) = - \nu  \,n_1;\\
\vspace{2mm}
\sigma (\overline{y}, \overline{y}) = \frac{\sqrt{|\mu_1||\mu_2|}}{|\mu_2|} \,\sigma (y,y) = \lambda_2 \frac{\sqrt{|\mu_1||\mu_2|}}{|\mu_2|} \, n_1 + \mu_2 \frac{\sqrt{|\mu_1||\mu_2|}}{|\mu_2|} \, n_2.
\end{array}
\end{equation*}

Since $\mu_1$ and $\mu_2$ are smooth functions and we consider a local theory, we may assume that $sign (\mu_1) = \varepsilon_1, \,\varepsilon_1 = \pm 1$ and $sign (\mu_2) = \varepsilon_2, \,\varepsilon_2 = \pm 1$ in some sub-domain. Now, using that 
$\frac{\lambda_2}{\lambda_1} = \frac{\mu_2}{\mu_1} = \frac{|\mu_2|}{|\mu_1|} \frac{\varepsilon_2}{\varepsilon_1}$, we get the formulas: 
$$
\begin{array}{l}
\vspace{2mm}
\sigma (\overline{x}, \overline{x}) = \lambda_1 \frac{\sqrt{|\mu_1||\mu_2|}}{|\mu_1|}\, n_1 + \varepsilon_1 \sqrt{|\mu_1||\mu_2|}\, n_2; \\
\vspace{2mm}
\sigma (\overline{y}, \overline{y}) = \lambda_1  \frac{\varepsilon_2}{\varepsilon_1} \frac{\sqrt{|\mu_1||\mu_2|}}{|\mu_1|}\, n_1 + \varepsilon_2 \sqrt{|\mu_1||\mu_2|}\, n_2. 
\end{array}
$$
Denoting $\overline{\lambda} = \lambda_1 \frac{\sqrt{|\mu_1||\mu_2|}}{|\mu_1|}$, $\overline{\mu} = \varepsilon_1 \sqrt{|\mu_1||\mu_2|}$, we obtain: 
\begin{equation*}
\begin{array}{l}
\vspace{2mm}
\sigma (\overline{x}, \overline{x}) = \overline{\lambda} \,n_1 + \overline{\mu} \,n_2; \\
\vspace{2mm}
\sigma (\overline{y}, \overline{y}) = \frac{\varepsilon_2}{\varepsilon_1} \overline{\lambda}\,n_1  + \frac{\varepsilon_2}{\varepsilon_1} \overline{\mu}\,n_2. 
\end{array}
\end{equation*}
Thus we conclude that there exist  two subcases:
\begin{enumerate}
\item
$\mu_1$ and $\mu_2$ have one and the same sign in the considered sub-domain, i.e. $\varepsilon_1 \varepsilon_2 = 1$, and hence we have $\sigma(\overline{x}, \overline{x}) = \sigma (\overline{y}, \overline{y})$.  \\

\item
 $\mu_1$ and $\mu_2$ have  opposite signs in the considered sub-domain, i.e. $\varepsilon_1 \varepsilon_2 = -1$, and hence we have $\sigma(\overline{x}, \overline{x}) = -\sigma (\overline{y}, \overline{y})$.\\
\end{enumerate}
Having in mind that $K - H^2 = - \frac{\mu_2}{\mu_1}(\lambda_1^2 + \mu_1^2)$, we get that the first subcase corresponds to $K - H^2 <0$, the second subcase corresponds to $K - H^2 >0$.

Hence, after the change of the parameters, we have the formulas:
\begin{equation*}\label{E:sigma2Isotr}
\begin{array}{l}
\vspace{2mm}
\sigma(\overline{x}, \overline{x}) = \overline{\lambda} \, n_1 + \overline{\mu} \, n_2 \\
\vspace{2mm}
\sigma(\overline{x}, \overline{y}) = -\nu \, n_1 \\
\vspace{2mm}
\sigma(\overline{y}, \overline{y}) = -\overline{\lambda} \, n_1 - \overline{\mu} \,n_2 
\end{array}, \quad \text{if} \; K - H^2 >0;
\end{equation*}
or 
\begin{equation*}\label{E:sigma1Isotr}
\begin{array}{l}
\vspace{2mm}
\sigma(\overline{x}, \overline{x}) = \overline{\lambda} \, n_1 + \overline{\mu} \, n_2 \\
\vspace{2mm}
\sigma(\overline{x}, \overline{y}) = -\nu \, n_1 \\
\vspace{2mm}
\sigma(\overline{y}, \overline{y}) = \overline{\lambda} \, n_1 + \overline{\mu} \,n_2 
\end{array}, \quad \text{if} \; K - H^2 <0.
\end{equation*}

In both cases ($K - H^2 > 0$ or $K - H^2 <0$), the metric function $\overline{f}$ is expressed by:
$$
\overline{f} = \frac{1}{\sqrt{|\overline{\mu}|}}.
$$

We introduce the notion of canonical isotropic parameters on a timelike surface with parallel normalized mean curvature vector field by the following definition.

\begin{defn} \label{D:canonicalIsotrParam}
 Let $\M$ be a timelike surface with parallel normalized mean curvature vector field in $\R^4_1$ and $K - H^2 \neq 0$. The isotropic parameters $(u,v)$ are said to be \textit{canonical} if the metric function $f$ is expressed by:
$$
f(u,v) = \frac{1}{\sqrt{|\mu|}}, \quad \mu \neq 0.
$$
\end{defn}

With the above considerations we have proved that:

\begin{prop}
Each timelike surface with parallel normalized mean curvature vector field satisfying $K - H^2 \neq 0$ locally admits canonical parameters.
\end{prop}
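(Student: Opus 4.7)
The plan is to start from an arbitrary isotropic parametrization $(u,v)$ guaranteed by Larsen's lemma, in which the metric has the form $g = -f^2(du\otimes dv + dv\otimes du)$, and to normalize it by a product-type change of parameters $\bar u = \bar u(u)$, $\bar v = \bar v(v)$. The key observation is that in the sub-domain where $\mu_1\mu_2 \neq 0$, the third and fourth integrability conditions in \eqref{E:integrCondIsotr1} can be rewritten, using $\gamma_1 = x(\ln f)$ and $\gamma_2 = y(\ln f)$, as
\begin{equation*}
x\bigl(\ln(f^2|\mu_2|)\bigr) = 0, \qquad y\bigl(\ln(f^2|\mu_1|)\bigr) = 0.
\end{equation*}
Since $x = z_u/f$ and $y = z_v/f$, this is equivalent to $(f^2|\mu_2|)_u = 0$ and $(f^2|\mu_1|)_v = 0$, which on a simply connected neighborhood produces positive smooth functions $\varphi(u)$ and $\psi(v)$ with $f^2|\mu_1| = \varphi(u)$ and $f^2|\mu_2| = \psi(v)$.

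Given this separation of variables, I would introduce the new coordinates
\begin{equation*}
\bar u = \int_{u_0}^u \sqrt{\varphi(s)}\,ds + \bar u_0, \qquad \bar v = \int_{v_0}^v \sqrt{\psi(t)}\,dt + \bar v_0.
\end{equation*}
Since $\varphi,\psi > 0$, this is a local diffeomorphism of product form, so $(\bar u, \bar v)$ is again an isotropic system. A direct chain-rule computation gives $\langle z_{\bar u}, z_{\bar u}\rangle = \langle z_{\bar v}, z_{\bar v}\rangle = 0$ and
\begin{equation*}
\langle z_{\bar u}, z_{\bar v}\rangle = \frac{\langle z_u, z_v\rangle}{\sqrt{\varphi\psi}} = -\frac{f^2}{f^2\sqrt{|\mu_1||\mu_2|}} = -\frac{1}{\sqrt{|\mu_1||\mu_2|}},
\end{equation*}
so that the new metric function is $\bar f = 1/\sqrt[4]{|\mu_1||\mu_2|}$.

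Finally I would transport $\sigma$ to the new pseudo-orthonormal frame $\bar x = z_{\bar u}/\bar f$, $\bar y = z_{\bar v}/\bar f$. Using the relation $\lambda_2/\lambda_1 = \mu_2/\mu_1$ (the last condition of \eqref{E:integrCondIsotr1}) and the locally constant signs $\varepsilon_1,\varepsilon_2\in\{\pm 1\}$ of $\mu_1,\mu_2$ (justified by continuity and $\mu_1\mu_2\neq 0$), I would set $\bar\mu := \varepsilon_1\sqrt{|\mu_1||\mu_2|}$ as done in the body of the text; then $\bar f = 1/\sqrt{|\bar\mu|}$, which is precisely the canonical-parameter condition of Definition \ref{D:canonicalIsotrParam}. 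The main obstacle is conceptual rather than computational: one has to recognize that the integrability conditions already force a separation of variables for $f^2|\mu_1|$ and $f^2|\mu_2|$, making a product reparametrization the natural device that simultaneously preserves isotropy and absorbs the mismatch between $f$ and $\sqrt[4]{|\mu_1||\mu_2|}$. The remainder of the proof is a chain-rule bookkeeping exercise.
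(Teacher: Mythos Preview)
Your proposal is correct and follows essentially the same route as the paper: you use the third and fourth integrability conditions of \eqref{E:integrCondIsotr1} to obtain the separation $f^2|\mu_1|=\varphi(u)$, $f^2|\mu_2|=\psi(v)$, perform the product reparametrization $\bar u=\int\!\sqrt{\varphi}$, $\bar v=\int\!\sqrt{\psi}$, and then read off $\bar f = 1/\sqrt[4]{|\mu_1||\mu_2|}=1/\sqrt{|\bar\mu|}$ with $\bar\mu=\varepsilon_1\sqrt{|\mu_1||\mu_2|}$. This is exactly the argument the paper gives in the paragraphs preceding the proposition.
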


Let $\M: z = z(u,v), \,\, (u,v) \in {\mathcal D}$ be a timelike surface with parallel normalized mean curvature vector field satisfying $K - H^2 \neq 0$ and parametrized by isotropic canonical parameters $(u,v)$. 
With respect to canonical isotropic parametrization the derivative formulas of $\M$ take the form:
\begin{equation*}\label{E:DerivForm-MNMCVF-canonical}
\begin{array}{ll}
\vspace{2mm}
\widetilde{\nabla} _x x = \gamma_1 x \qquad\quad + \lambda n_1 + \mu n_2;  & \qquad \quad  \widetilde{\nabla}_x n_1 = -\nu x  + \lambda y; \\
\vspace{2mm}
\widetilde{\nabla}_x y = \quad\quad -\gamma_1 y -\nu n_1; & \qquad \quad \widetilde{\nabla}_y n_1 = -\varepsilon   \lambda x -\nu y;\\
\vspace{2mm}
\widetilde{\nabla}_y x = -\gamma_2 x \quad\quad -\nu n_1; & \qquad \quad \widetilde{\nabla}_x n_2 = \quad\quad + \mu y;\\
\vspace{2mm}
\widetilde{\nabla}_y y = \quad\quad\,\,\, \gamma_2 y \, \, -\varepsilon   \lambda n_1 -\varepsilon   \mu n_2; & \qquad \quad \widetilde{\nabla}_y n_2 = -\varepsilon   \mu x,
\end{array}
\end{equation*}
where $\varepsilon = 1$ in the case $K - H^2 > 0$, and $\varepsilon = - 1$ in the case $K - H^2 < 0$.

The geometric meaning of the canonical parametrization can be explained as follows: if $(u,v)$ are canonical isotropic parameters, then the canonical directions $x = \frac{z_u}{f}$ and $y= \frac{z_v}{f}$ satisfy  the relation:
\begin{equation*}
\begin{array}{l}
\vspace{2mm}
\sigma(x, x) = -\sigma (y, y), \quad \textrm {in the case} \;\; K - H^2 > 0;\\
\vspace{2mm}
\sigma(x, x) = \sigma (y, y), \quad \textrm {in the case} \;\; K - H^2 < 0.
\end{array}
\end{equation*}

Moreover, with respect to canonical isotropic parameters $(u,v)$, the functions $\gamma_1$ and $\gamma_2$ are expressed by:
\begin{equation} \label{E:Eq-104}
\gamma_1 = -\frac{|\mu|_u}{2\sqrt{|\mu|}}, \quad \gamma_2 = -\frac{|\mu|_v}{2\sqrt{|\mu|}}.
\end{equation}
 From integrability conditions \eqref{E:integrCondIsotr1}, in the case $\lambda_1 = \lambda$, $\lambda_2 = -\varepsilon \lambda$, $\mu_1 = \mu$, $\mu_2 = -\varepsilon \mu$, we get
\begin{equation*}\label{E:integrCondIsotr1-1}
\begin{array}{l}
\vspace{2mm}
x(\nu)+y(\lambda) + 2 \gamma_2 \lambda=0; \\
\vspace{2mm}
-\varepsilon x(\lambda)+y(\nu) -\varepsilon  2 \gamma_1 \lambda=0; \\
\vspace{2mm}
x(\gamma_2)+y(\gamma_1) + 2 \gamma_1 \gamma_2 - \nu^2 - \varepsilon (\lambda^2 + \mu^2)=0.
\end{array}
\end{equation*}
Then, having in mind \eqref{E:Eq-104}, from the equalities above we obtain
\begin{equation*} 
\begin{array}{l}
\vspace{2mm}
\nu_u + \lambda_v = \lambda (\ln|\mu|)_v;\\
\vspace{2mm}
\lambda_u - \varepsilon \nu_v = \lambda (\ln|\mu|)_u;\\
\vspace{2mm}
|\mu| \left(\ln |\mu|\right)_{uv} = - \nu^2 - \varepsilon (\lambda^2 + \mu^2).
\end{array} 
\end{equation*}

So, by introducing canonical parameters on a surface with  parallel normalized mean curvature vector field we manage to reduce up to three the number of functions and the number of partial differential equations. In the next section we shall prove that these three functions $\lambda$, $\mu$, and $\nu$ determine the surface up to a motion.

\vskip 3mm
\subsection{Surfaces satisfying   $K - H^2 = 0$}.

\vskip 2mm
Now we shall consider the case $K - H^2 = 0$, i.e. $\mu_1\mu_2 = 0$, $\mu_1^2 + \mu_2^2 \neq 0$. 
Without loss of generality we assume that $\mu_1 \neq 0$ and $\mu_2 = 0$  in a sub-domain $\mathcal{D}_0$. 
From $\mu_1\lambda_2 - \lambda_1 \mu_2 = 0$ it follows that $\lambda_2 = 0$, which implies that $K = \nu^2$.
In this case, the derivative formulas take the following form:
\begin{equation} \label{E:DerivForm-MNMCVF-canonical-2}
\begin{array}{ll}
\vspace{2mm}
\widetilde{\nabla} _x x = \gamma_1 x \qquad\quad + \lambda_1 n_1 + \mu_1 n_2;  & \qquad \quad  \widetilde{\nabla}_x n_1 = -\nu x  + \lambda_1 y; \\
\vspace{2mm}
\widetilde{\nabla}_x y = \quad\quad -\gamma_1 y -\nu n_1; & \qquad \quad \widetilde{\nabla}_y n_1 = \quad \quad  -\nu y;\\
\vspace{2mm}
\widetilde{\nabla}_y x = -\gamma_2 x \quad\quad -\nu n_1; & \qquad \quad \widetilde{\nabla}_x n_2 = \quad\quad  \mu_1 y;\\
\vspace{2mm}
\widetilde{\nabla}_y y = \quad\quad\,\,\, \gamma_2 y; & \qquad \quad \widetilde{\nabla}_y n_2 = 0.
\end{array}
\end{equation}
From integrability conditions \eqref{E:integrCondIsotr1} we get:
\begin{equation} \label{E:Eq-103}
\begin{array}{l}
\vspace{2mm}
y(\nu) = 0; \\
\vspace{2mm}
x(\nu) + y(\lambda_1) + 2\gamma_2 \lambda_1 = 0; \\
\vspace{2mm}
y(\mu_1) + 2\gamma_2 \mu_1 = 0; \\
\vspace{2mm}
x(\gamma_2) + y(\gamma_1) + 2 \gamma_1 \gamma_2 = \nu^2.
\end{array}
\end{equation}
So, the first equality of \eqref{E:Eq-103} implies that:
$$
\nu = \nu(u),
$$
and from the third one we get:
$$y(\ln|\mu_1|) = -2\gamma_2.$$
Having in mind that $\gamma_2 =  y(\ln f)$, we obtain
$$y(\ln (f^2 |\mu_1|)) = 0,$$
which implies that there exists a function $\varphi (u) > 0$ such that  $f^2 |\mu_1| = \varphi (u)$.

Consider the following change of the parameters:
$$
\begin{array}{l}
\vspace{2mm}
\overline{u} = \int_{u_0}^u \varphi (u) d u + \overline{u}_0, \quad \overline{u}_0 = const\\
\vspace{2mm}
\overline{v} = v + \overline{v}_0, \quad \overline{v}_0 = const
\end{array}
$$
Under this change of the parameters we obtain  
$$
\begin{array}{l}
\vspace{2mm}
z_{\overline{u}} = \frac{z_u}{f^2 |\mu_1|}; \\
\vspace{2mm}
z_{\overline{v}}  = z_v,
\end{array}
$$
which implies that
$$
\langle z_{\overline{u}}, z_{\overline{u}} \rangle = 0; \quad \langle z_{\overline{u}}, z_{\overline{v}} \rangle = -\frac{1}{|\mu_1|}; \quad 
\langle z_{\overline{v}}, z_{\overline{v}} \rangle = 0.
$$
Hence, $(\overline{u},\overline{v})$ are isotropic parameters with respect to which  the new metric function is:
$$
\overline{f} = \frac{1}{\sqrt{|\mu_1|}}.
$$
We consider the isotropic directions
$$
\begin{array}{l}
\vspace{2mm}
\overline{x} = \frac{z_{\overline{u}}}{\overline{f}} = \frac{x}{f\sqrt{|\mu_1|}}; \\
\vspace{2mm}
\overline{y} = \frac{z_{\overline{v}}}{\overline{f}} = f\sqrt{|\mu_1|} y. 
\end{array} 
$$
Then, the second fundamental tensor is expressed as follows:
$$
\begin{array}{l}
\vspace{2mm}
\sigma(\overline{x}, \overline{x}) = \frac{\lambda_1}{f^2|\mu_1|} n_1 + \frac{\mu_1}{f^2|\mu_1|}n_2; \\
\vspace{2mm}
\sigma(\overline{x}, \overline{y}) = -\nu n_1; \\
\vspace{2mm}
\sigma(\overline{y}, \overline{y}) =  0.
\end{array}
$$
Denoting 
$\overline{\lambda} = \frac{\lambda_1}{f^2|\mu_1|}$ and $\overline{\mu} = \frac{\mu_1}{f^2|\mu_1|}$,
we get
$$
\begin{array}{l}
\vspace{2mm}
\sigma(\overline{x}, \overline{x}) = \overline{\lambda} n_1 + \overline{\mu} n_2; \\
\vspace{2mm}
\sigma(\overline{x}, \overline{y}) = -\nu n_1; \\
\vspace{2mm}
\sigma(\overline{y}, \overline{y}) =  0.
\end{array}
$$
Note that $\overline{\mu} = \frac{\varepsilon}{f^2}$, where $\varepsilon = sign (\mu_1)$. Obviously, $\frac{\overline{\lambda}}{\overline{\mu}} = \frac{\lambda_1}{\mu_1}$.

So, in the case $K - H^2 = 0$, we can also introduce canonical isotropic parameters by the next definition.

\begin{defn} \label{D:canonicalParam}
 Let $\M$ be a timelike surface with parallel normalized mean curvature vector field in $\R^4_1$ and $K - H^2 = 0$. The isotropic parameters $(u,v)$ are said to be \textit{canonical} if the metric function $f$ is expressed by:
$$
f(u,v) = \frac{1}{\sqrt{|\mu|}}, \quad \mu \neq 0.
$$
\end{defn}

With the above considerations we have proved that:

\begin{prop}
Each timelike surface with parallel normalized mean curvature vector field satisfying $K - H^2 = 0$ locally admits canonical parameters.
\end{prop}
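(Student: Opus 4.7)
The plan is to mirror the derivation carried out in the $K-H^2\neq 0$ subsection, but exploiting the simplification that one of the normal components of $\sigma$ vanishes. Since the preceding discussion already sets up almost all the needed machinery, the proof reduces to identifying the right isotropic reparametrization and checking it produces the canonical normalization $\overline{f}=1/\sqrt{|\overline{\mu}|}$.

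First, I would record the starting data: working in a sub-domain $\mathcal{D}_0$ where $\mu_1\neq 0$ and $\mu_2=0$, the integrability condition $\mu_1\lambda_2-\lambda_1\mu_2=0$ forces $\lambda_2=0$, so the relevant derivative formulas collapse to \eqref{E:DerivForm-MNMCVF-canonical-2}, and the simplified integrability conditions \eqref{E:Eq-103} are available. The third equation of \eqref{E:Eq-103}, namely $y(\mu_1)+2\gamma_2\mu_1=0$, rewritten as $y(\ln|\mu_1|)=-2\gamma_2$, combined with $\gamma_2=y(\ln f)$, yields $y\bigl(\ln(f^2|\mu_1|)\bigr)=0$. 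Hence there exists a smooth positive function $\varphi(u)$ such that $f^2|\mu_1|=\varphi(u)$.

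Second, I would introduce the reparametrization
\[
\overline{u}=\int_{u_0}^{u}\sqrt{\varphi(t)}\,dt+\overline{u}_0,\qquad \overline{v}=v+\overline{v}_0,
\]
and compute the chain-rule transformations $z_{\overline{u}}=z_u/\sqrt{\varphi(u)}$ and $z_{\overline{v}}=z_v$. Since $\langle z_u,z_u\rangle=\langle z_v,z_v\rangle=0$, the new parameters remain isotropic, with coefficient $\langle z_{\overline{u}},z_{\overline{v}}\rangle=-f^2/\sqrt{\varphi(u)}$, so the new metric function is $\overline{f}^2=f^2/\sqrt{\varphi(u)}$. Setting $\overline{x}=z_{\overline{u}}/\overline{f}$ and using the relation between $x$ and $\overline{x}$, the second fundamental form transforms by an overall scalar so that the coefficient of $n_2$ in $\sigma(\overline{x},\overline{x})$ becomes $\overline{\mu}=\mu_1\sqrt{\varphi(u)}/f^2$, while $\sigma(\overline{x},\overline{y})=-\nu n_1$ and $\sigma(\overline{y},\overline{y})=0$ are preserved. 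The key check is then
\[
\overline{f}^{\,2}\,|\overline{\mu}|=\frac{f^2}{\sqrt{\varphi(u)}}\cdot\frac{|\mu_1|\sqrt{\varphi(u)}}{f^2}=f^2|\mu_1|/\varphi(u)\cdot\sqrt{\varphi(u)}/\sqrt{\varphi(u)}=1,
\]
which gives exactly $\overline{f}=1/\sqrt{|\overline{\mu}|}$ as required by Definition~\ref{D:canonicalParam}.

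Third, I would conclude: $(\overline{u},\overline{v})$ are local isotropic parameters on the surface for which the metric function satisfies the canonical normalization, and $\overline{\mu}\neq 0$ throughout $\mathcal{D}_0$ because $\mu_1$ and $\varphi$ are nowhere vanishing there. This establishes the local existence of canonical parameters in the case $K-H^2=0$. The main obstacle is purely bookkeeping: making sure the chain-rule factors combine so that the product $\overline{f}^{\,2}|\overline{\mu}|$ is identically $1$ and not merely a function of $u$; this is why the exponent $\sqrt{\varphi}$ (rather than $\varphi$) is the correct choice in the definition of $\overline{u}$, matching the analogous choice made in the $K-H^2\neq 0$ subsection.
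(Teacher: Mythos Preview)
Your strategy is exactly that of the paper: exploit the third integrability equation to see that $f^{2}|\mu_1|=\varphi(u)$ depends on $u$ alone, then reparametrize the $u$-variable only (leaving $v$ essentially unchanged) so that the new metric function becomes $1/\sqrt{|\overline\mu|}$. The paper in fact integrates $\varphi$ itself (writing $\overline u=\int\varphi\,du$, $\overline v=v$), which leads to $\overline f=1/\sqrt{|\mu_1|}$ and $\overline\mu=\mu_1/(f^{2}|\mu_1|)$; your choice $\overline u=\int\sqrt{\varphi}\,du$ is the variant that makes the identity $\overline f^{\,2}|\overline\mu|=1$ hold on the nose.

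There is, however, a genuine computational slip in your execution. With $\alpha'=\sqrt{\varphi}$ and $\beta'=1$ one gets $\overline x=\sqrt{\beta'/\alpha'}\,x=\varphi^{-1/4}x$, hence $\sigma(\overline x,\overline x)=\varphi^{-1/2}\sigma(x,x)$ and therefore $\overline\mu=\mu_1/\sqrt{\varphi}$, \emph{not} $\mu_1\sqrt{\varphi}/f^{2}$. With the correct value the check is immediate:
\[
\overline f^{\,2}\,|\overline\mu|=\frac{f^{2}}{\sqrt{\varphi}}\cdot\frac{|\mu_1|}{\sqrt{\varphi}}=\frac{f^{2}|\mu_1|}{\varphi}=1.
\]
As written, your displayed chain is inconsistent: the product $\dfrac{f^{2}}{\sqrt{\varphi}}\cdot\dfrac{|\mu_1|\sqrt{\varphi}}{f^{2}}$ simplifies to $|\mu_1|$, not to $1$, so the error in $\overline\mu$ and the error in the algebraic rearrangement mask each other without actually cancelling. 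Once you replace $\overline\mu$ by $\mu_1/\sqrt{\varphi}$, the argument is complete and matches the paper's proof.
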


With respect to canonical isotropic parameters, in the case $K - H^2 = 0$, we  have derivative formulas  \eqref{E:DerivForm-MNMCVF-canonical-2}.
 From integrability conditions \eqref{E:integrCondIsotr1}, in the case $\lambda_1 = \lambda$, $\lambda_2 = 0$, $\mu_1 = \mu$, $\mu_2 = 0$, we get
$$\begin{array}{l}
\vspace{2mm}
x(\nu)+y(\lambda)+2\gamma_2\lambda = 0;\\ 
\vspace{2mm}
x(\gamma_2)+y(\gamma_1)+2\gamma_1 \gamma_2 = \nu^2,
\end{array}$$
which in view of \eqref{E:Eq-104} imply
$$\begin{array}{l}
\vspace{2mm}
\nu_u + \lambda_v = \lambda (\ln |\mu|)_v; \\
\vspace{2mm}
|\mu| (\ln |\mu|)_{uv} = -\nu^2.
\end{array}$$

Hence, in the case $K - H^2 = 0$, by introducing canonical parameters on a surface with  parallel normalized mean curvature vector field we manage to reduce the number of functions and the number of partial differential equations determining the surface.

\section {Fundamental Theorems}

Now we shall prove fundamental existence and uniqueness theorems for the class of timelike surfaces
with parallel normalized mean curvature vector field in terms of canonical parameters.

\begin{theorem}\label{T:Fundamental Theorem-1}
Let $\lambda(u,v)$, $\mu(u,v)$ and $\nu(u,v)$ be  smooth functions, $\mu  \neq 0$, $\nu \neq const$, 
 defined in a domain
${\mathcal D}, \,\, {\mathcal D} \subset {\R}^2$, and satisfying the conditions
\begin{equation} \label{E:Eq-fund-1}
\begin{array}{l}
\vspace{2mm}
\nu_u + \lambda_v = \lambda (\ln|\mu|)_v;\\
\vspace{2mm}
\lambda_u - \varepsilon \nu_v = \lambda (\ln|\mu|)_u;\\
\vspace{2mm}
|\mu| \left(\ln |\mu|\right)_{uv} = - \nu^2 - \varepsilon (\lambda^2 + \mu^2),
\end{array} 
\end{equation}
where $\varepsilon = \pm 1$. If $\{x_0, \, y_0, \, (n_1)_0,\, (n_2)_0\}$ is a pseudo-orthonormal frame at
a point $p_0 \in \R^4_1$, then there exists a subdomain ${\mathcal D}_0 \subset {\mathcal D}$
and a unique timelike surface
$\M: z = z(u,v), \,\, (u,v) \in {\mathcal D}_0$  with parallel normalized mean curvature vector field, such that $\M$ passes through $p_0$, $\{x_0, \, y_0, \, (n_1)_0,\, (n_2)_0\}$ is the geometric
frame of $\M$ at the point $p_0$,  the functions   $\lambda(u,v)$, $\mu(u,v)$, $\nu(u,v)$ are the geometric functions
of the surface, and $K - H^2 >0$ in the case $\varepsilon =  1$, resp. $K - H^2 <0$ in the case $\varepsilon = - 1$.
Furthermore, $(u,v)$ are canonical isotropic parameters of $\M$.
\end{theorem}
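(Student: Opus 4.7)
This is a Bonnet-type existence and uniqueness statement, and my plan is the classical route: reconstruct a pseudo-orthonormal frame field on $\mathcal{D}$ by solving the overdetermined linear system encoded in the derivative formulas \eqref{E:DerivForm-MNMCVF-canonical}, then integrate it to recover the position vector $z(u,v)$, and finally verify that the resulting $z$ realizes all the asserted geometric features. First I would fix the auxiliary data forced by the canonical parametrization: set $f := 1/\sqrt{|\mu|}$, $\gamma_1 := -|\mu|_u/(2\sqrt{|\mu|})$, $\gamma_2 := -|\mu|_v/(2\sqrt{|\mu|})$ in accordance with \eqref{E:Eq-104} (note $f_u = f^2\gamma_1$, $f_v = f^2\gamma_2$). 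Multiplying the eight formulas \eqref{E:DerivForm-MNMCVF-canonical} by $f$, using that on $\R^4_1$-valued maps $\widetilde{\nabla}_x = (1/f)\partial_u$ and $\widetilde{\nabla}_y = (1/f)\partial_v$, produces an overdetermined linear first-order system $\Phi_u = fA\,\Phi$, $\Phi_v = fB\,\Phi$ for the $\R^4_1$-valued quadruple $\Phi = (x,y,n_1,n_2)$. The Frobenius theorem applied with initial condition $\Phi(u_0, v_0) = (x_0, y_0, (n_1)_0, (n_2)_0)$ at some $(u_0, v_0)$ mapped to $p_0$ will yield a unique smooth solution on a subdomain $\mathcal{D}_0$ once the compatibility $(\Phi_u)_v = (\Phi_v)_u$ is checked.

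The compatibility, expanded coefficient-wise against the basis $\Phi$, unpacks into exactly the integrability identities \eqref{E:integrCondIsotr1} specialized by $\lambda_1 = \lambda$, $\lambda_2 = -\varepsilon\lambda$, $\mu_1 = \mu$, $\mu_2 = -\varepsilon\mu$, $\beta_1 = \beta_2 = 0$ -- the calculation \eqref{E:R'-cal} run in reverse. Using the explicit form of $\gamma_1, \gamma_2$, in particular the identity $x(\gamma_2)+y(\gamma_1)+2\gamma_1\gamma_2 = -|\mu|(\ln|\mu|)_{uv}$, the nontrivial identities collapse precisely to the three hypothesized equations \eqref{E:Eq-fund-1}. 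Next, to see that $\Phi$ stays pseudo-orthonormal throughout $\mathcal{D}_0$, I form the Gram matrix $G_{ij}(u,v) := \langle \Phi_i, \Phi_j\rangle$; differentiating through the linear system, a direct check shows the coefficients of $A, B$ are arranged so that the target constant Gram matrix $G^0$ (with $G^0_{xy} = -1$, $G^0_{n_1n_1} = G^0_{n_2n_2} = 1$, all other entries $0$) is a stationary solution. Hence $G - G^0$ obeys a homogeneous linear first-order PDE system with vanishing initial datum and must therefore vanish identically.

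Finally, define $z: \mathcal{D}_0 \to \R^4_1$ by $z(u_0, v_0) = p_0$ together with $z_u = fx$, $z_v = fy$. The compatibility $(fx)_v = (fy)_u$ reduces, via $f_u = f^2\gamma_1$, $f_v = f^2\gamma_2$ and the derivative formulas $\widetilde{\nabla}_y x = -\gamma_2 x - \nu n_1$, $\widetilde{\nabla}_x y = -\gamma_1 y - \nu n_1$, to the tautology $-f^2\nu n_1 = -f^2\nu n_1$. The surface $\M = z(\mathcal{D}_0)$ then has $E = G = 0$, $F = -f^2$, hence is timelike with $(u,v)$ canonical isotropic parameters of metric function $1/\sqrt{|\mu|}$. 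Reading \eqref{E:DerivForm-MNMCVF-canonical} off yields $H = -\sigma(x,y) = \nu n_1 \neq 0$ together with $\beta_1 = \beta_2 = 0$; since $\nu \neq \mathrm{const}$, Proposition \ref{P:parallel n1} ensures that $\M$ has parallel normalized mean curvature vector field, while the section's computation $K - H^2 = \varepsilon(\lambda^2 + \mu^2)$ supplies the correct sign. Uniqueness follows directly from Frobenius uniqueness of $\Phi$ together with uniqueness of the antiderivative $z$. I expect the only real obstacle to be careful bookkeeping inside the integrability check, but since \eqref{E:Eq-fund-1} was produced by converting precisely those compatibility conditions into canonical parameters, the verification amounts to reversing that earlier derivation and introduces no new difficulty.
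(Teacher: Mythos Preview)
Your proposal is correct and follows essentially the same route as the paper: set up the linear first-order PDE system for the frame $(x,y,n_1,n_2)$ coming from \eqref{E:DerivForm-MNMCVF-canonical}, verify via \eqref{E:Eq-fund-1} that the Frobenius compatibility holds, preserve pseudo-orthonormality by the Gram-matrix (equivalently, the paper's ten scalar functions $\varphi_1,\dots,\varphi_{10}$) argument, then integrate $z_u=fx$, $z_v=fy$ and read off the geometric properties. The only differences are expository---you phrase the orthonormality step in terms of the Gram matrix and spell out the $z_{uv}=z_{vu}$ check a bit more explicitly---but the mathematical content coincides with the paper's proof.
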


\begin{proof}
Let us denote $\gamma_1 =  -(\sqrt{|\mu|})_u$, $\gamma_2 =  -(\sqrt{|\mu|})_v$ and 
  consider the following
system of partial differential equations for the unknown vector
functions $x = x(u,v), \, y = y(u,v), \, n_1 = n_1(u,v), \,n_2 = n_2(u,v)$
in $\R^4_1$:
\begin{equation}
\begin{array}{ll} \label{E:Eq-system1}
\vspace{2mm} 
x_u = \frac{1}{\sqrt{|\mu|}} \left(\gamma_1\, x + \lambda\, n_1  + \mu\, n_2\right)
& \quad x_v = \frac{1}{\sqrt{|\mu|}} \left( -\gamma_2\, x - \nu\, n_1\right)\\
\vspace{2mm} 
y_u = \frac{1}{\sqrt{|\mu|}}\left(- \gamma_1\, y - \nu\, n_1 \right)
& \quad y_v = \frac{1}{\sqrt{|\mu|}} \left( \gamma_2\, y -\varepsilon \lambda \, n_1 -\varepsilon \mu \, n_2 \right)\\
\vspace{2mm} 
(n_1)_u = \frac{1}{\sqrt{|\mu|}}\left( - \nu\, x + \lambda\, y \right) &
\quad (n_1)_v =  \frac{1}{\sqrt{|\mu|}}\left( - \varepsilon \lambda\, x - \nu\, y \right) \\
\vspace{1mm} 
(n_2)_u = \frac{1}{\sqrt{|\mu|}} \left(  \mu\, y \right) &
\quad (n_2)_v =  \frac{1}{\sqrt{|\mu|}} \left( -  \varepsilon \mu\, x \right)
\end{array}
\end{equation}
We denote
$$\mathcal{F} =
\left(%
\begin{array}{c}
\vspace{1mm}
  x \\
  \vspace{1mm}
  y \\
  \vspace{1mm}
  n_1 \\
  \vspace{1mm}
  n_2 \\
\end{array}%
\right)\!\!; \;\;
\mathcal{A} = \frac{1}{\sqrt{|\mu|}}  \left(%
\begin{array}{cccc}
\vspace{1mm}
  \gamma_1 & 0 & \lambda  & \mu \\
  \vspace{1mm}
  0 & -\gamma_1 & -\nu & 0 \\
  \vspace{1mm}
  -\nu & \lambda & 0 & 0 \\
  \vspace{1mm}
  0 & \mu & 0 & 0 \\
\end{array}%
\right)\!\!; \;\;
\mathcal{B} = \frac{1}{\sqrt{|\mu|}}  
\left(%
\begin{array}{cccc}
\vspace{1mm}
  -\gamma_2 & 0 & -\nu & 0 \\
  \vspace{1mm}
  0 & \gamma_2 & -\varepsilon \lambda & -\varepsilon \mu \\
  \vspace{1mm}
  -\varepsilon \lambda & -\nu & 0 & 0 \\
  \vspace{1mm}
  -\varepsilon \mu & 0 & 0 & 0 \\
\end{array}%
\right)\!.$$
Then, system \eqref{E:Eq-system1} can be written in matrix form as follows:
\begin{equation}
\begin{array}{l} \label{E:Eq-105}
\vspace{2mm}
\mathcal{F}_u = \mathcal{A}\,\mathcal{F},\\
\vspace{2mm} 
\mathcal{F}_v = \mathcal{B}\,\mathcal{F}.
\end{array}
\end{equation}
The integrability conditions of system \eqref{E:Eq-105} are $\mathcal{F}_{uv} = \mathcal{F}_{vu}$,
i.e.
\begin{equation} \label{E:Eq-106}
\displaystyle{\frac{\partial a_i^k}{\partial v} - \frac{\partial b_i^k}{\partial u}
+ \sum_{j=1}^{4}(a_i^j\,b_j^k - b_i^j\,a_j^k) = 0, \quad i,k = 1,
\dots, 4,}
\end{equation}
 where by $a_i^j$ and $b_i^j$ we denote  the
elements of the matrices $\mathcal{A}$ and $\mathcal{B}$. Using  \eqref{E:Eq-fund-1}, one can check that
 equalities \eqref{E:Eq-106} are fulfilled. Hence, there exists a subdomain
$\mathcal{D}_1 \subset \mathcal{D}$ and unique vector functions $x
= x(u,v), \, y = y(u,v), \,n_1 = n_1(u,v)$, $n_2 = n_2(u,v), \,\, (u,v)
\in \mathcal{D}_1$, which satisfy system \eqref{E:Eq-system1} and the conditions
$$x(u_0,v_0) = x_0, \quad y(u_0,v_0) = y_0, \quad n_1(u_0,v_0) = (n_1)_0, \quad n_2(u_0,v_0) = (n_2)_0.$$

It can be proved that $x(u,v), \, y(u,v), \,n_1(u,v), \,n_2(u,v)$ form
a pseudo-orthonormal frame in $\R^4_1$ for each $(u,v) \in
\mathcal{D}_1$. Indeed, let us consider the following functions:
$$\begin{array}{lll}
\vspace{2mm}
  \varphi_1 = \langle x,x \rangle; & \qquad \varphi_5 =
  \langle x,y \rangle +1; & \qquad \varphi_8 = \langle y,n_1 \rangle; \\
\vspace{2mm}
  \varphi_2 = \langle y, y \rangle; & \qquad \varphi_6 =
  \langle x,n_1 \rangle; & \qquad \varphi_9 = \langle y,n_2 \rangle; \\
\vspace{2mm}
  \varphi_3 = \langle n_1, n_1 \rangle - 1; & \qquad \varphi_7 =
  \langle x,n_2 \rangle; & \qquad \varphi_{10} = \langle n_1,n_2 \rangle; \\
\vspace{1mm}
  \varphi_4 = \langle n_2,n_2 \rangle - 1; &   &  \\
\end{array}$$
defined for $(u,v) \in \mathcal{D}_1$. Having in mind  that $x(u,v), \,
y(u,v), \,n_1(u,v), \,n_2(u,v)$ satisfy \eqref{E:Eq-system1}, we obtain  the system
\begin{equation}
\begin{array}{lll} \label{E:Eq-107}
\vspace{2mm}
\displaystyle{\frac{\partial \varphi_i}{\partial u} = p_i^j \, \varphi_j},\\
\vspace{1mm} \displaystyle{\frac{\partial \varphi_i}{\partial v} =
q_i^j \, \varphi_j};
\end{array} \qquad i = 1, \dots, 10,
\end{equation}
where $p_i^j, q_i^j, \,\, i,j = 1, \dots, 10$ are
functions of $(u,v) \in \mathcal{D}_1$.  System \eqref{E:Eq-107} is a linear
system of partial differential equations for the functions
$\varphi_i(u,v)$, satisfying the conditions $\varphi_i(u_0,v_0) = 0$ for all $i = 1,
\dots, 10$, since $\{x_0, \, y_0, \, (n_1)_0,\, (n_2)_0\}$ is a pseudo-orthonormal frame. Therefore, $\varphi_i(u,v) = 0, \,\,i = 1, \dots, 10$ for
each $(u,v) \in \mathcal{D}_1$. Hence, the vector functions
$x(u,v), \, y(u,v), \,n_1(u,v), \,n_2(u,v)$ form a pseudo-orthonormal frame
in $\E^4_1$ for each $(u,v) \in \mathcal{D}_1$.

Finally, we consider the following system of partial differential equations for the vector function
$z(u,v)$:
\begin{equation} \label{E:Eq-108}
\begin{array}{lll}
\vspace{2mm}
z_u = \frac{1}{\sqrt{|\mu|}}\, x\\
\vspace{1mm} z_v = \frac{1}{\sqrt{|\mu|}}\, y
\end{array}
\end{equation}
It follows from equalities \eqref{E:Eq-fund-1} and \eqref{E:Eq-system1}  that the integrability
conditions $z_{uv} = z_{vu}$ of system \eqref{E:Eq-108}
 are fulfilled. Hence,  there exists a subdomain  $\mathcal{D}_0 \subset \mathcal{D}_1$ and
a unique vector function $z = z(u,v)$, defined for $(u,v) \in
\mathcal{D}_0$ and satisfying $z(u_0, v_0) = p_0$.

Now, we consider the surface $\M: z = z(u,v), \,\, (u,v) \in
\mathcal{D}_0$. Obviously, $\M$ is a timelike surface in $\R^4_1$. It follows from  \eqref{E:Eq-system1} that $\M$ has parallel normalized mean curvature vector field, since $H =\nu \,n_1$; $D_x n_1 = 0$ and $D_y n_1 =0$. Moreover, $(u,v)$ are canonical isotropic  parameters of $\M$, since $\langle z_u, z_v \rangle = - \frac{1}{|\mu|}$, and the metric function is $f = \frac{1}{\sqrt{|\mu|}}$. 

\end{proof}

\vskip 2mm

\begin{theorem}\label{T:Fundamental Theorem-2}
Let $\lambda(u,v)$, $\mu(u,v)$ and $\nu(u)$ be  smooth functions, $\mu  \neq 0$, $\nu \neq const$,
defined in a domain
${\mathcal D}, \,\, {\mathcal D} \subset {\R}^2$, and satisfying the conditions
\begin{equation} \label{E:Eq-fund-2}
\begin{array}{l}
\vspace{2mm}
\nu_u + \lambda_v = \lambda (\ln|\mu|)_v;\\
\vspace{2mm}
|\mu| \left(\ln |\mu|\right)_{uv} = - \nu^2.
\end{array} 
\end{equation}
If $\{x_0, \, y_0, \, (n_1)_0,\, (n_2)_0\}$ is a pseudo-orthonormal frame at
a point $p_0 \in \R^4_1$, then there exists a subdomain ${\mathcal D}_0 \subset {\mathcal D}$
and a unique timelike surface
$\M: z = z(u,v), \,\, (u,v) \in {\mathcal D}_0$  with parallel normalized mean curvature vector field, such that $\M$ passes through $p_0$, $\{x_0, \, y_0, \, (n_1)_0,\, (n_2)_0\}$ is the geometric
frame of $\M$ at the point $p_0$,  the functions   $\lambda(u,v)$, $\mu(u,v)$, $\nu(u)$ are the geometric functions
of the surface, and $K - H^2 =0$.  Furthermore, $(u,v)$ are canonical isotropic parameters of $\M$.
\end{theorem}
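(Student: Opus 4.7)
My plan is to mirror the proof of Theorem~\ref{T:Fundamental Theorem-1}, translating the derivative formulas \eqref{E:DerivForm-MNMCVF-canonical-2} (specialized to $K-H^2=0$) into a first-order linear PDE system for a moving frame, verifying its integrability from \eqref{E:Eq-fund-2}, and then recovering the immersion $z$ by a second integration.

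First I would set $\gamma_1 = -(\sqrt{|\mu|})_u$ and $\gamma_2 = -(\sqrt{|\mu|})_v$, so that \eqref{E:Eq-104} holds by construction. Using the fact that in canonical parameters $\partial_u = x/\sqrt{|\mu|}$ and $\partial_v = y/\sqrt{|\mu|}$, I would rewrite \eqref{E:DerivForm-MNMCVF-canonical-2} with $\lambda_1=\lambda$, $\mu_1=\mu$, $\lambda_2=\mu_2=0$ as the PDE system
\begin{equation*}
\mathcal{F}_u = \mathcal{A}\,\mathcal{F}, \qquad \mathcal{F}_v = \mathcal{B}\,\mathcal{F},
\end{equation*}
for $\mathcal{F} = (x,y,n_1,n_2)^{T}$, where $\mathcal{A}$ and $\mathcal{B}$ are the $4\times 4$ matrices obtained by dividing the appropriate combinations of $\gamma_1,\gamma_2,\lambda,\mu,\nu$ by $\sqrt{|\mu|}$. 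The crucial observation is that because $\nu_v=0$ (from $\nu=\nu(u)$) and because the coefficients $\lambda_2,\mu_2$ vanish identically, the compatibility relations $\partial_v a_i^k - \partial_u b_i^k + \sum_j(a_i^j b_j^k - b_i^j a_j^k) = 0$ reduce precisely to the two equations of \eqref{E:Eq-fund-2} together with the identity $y(\ln(f^2|\mu|))=0$, which is automatic in canonical parameters.

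Once integrability is verified, the Frobenius theorem gives a unique solution $\mathcal{F}(u,v)$ on some subdomain $\mathcal{D}_1\subset\mathcal{D}$ with the prescribed initial values. To show that $\{x,y,n_1,n_2\}$ is a pseudo-orthonormal frame everywhere on $\mathcal{D}_1$, I would form the ten inner-product functions $\varphi_1,\dots,\varphi_{10}$ (exactly as in the proof of Theorem~\ref{T:Fundamental Theorem-1}) and verify that they satisfy a linear homogeneous first-order PDE system with zero initial data at $(u_0,v_0)$; uniqueness then forces $\varphi_i\equiv 0$. Finally, I would integrate
\begin{equation*}
z_u = \tfrac{1}{\sqrt{|\mu|}}\,x, \qquad z_v = \tfrac{1}{\sqrt{|\mu|}}\,y,
\end{equation*}
whose compatibility $z_{uv}=z_{vu}$ follows from the second and third equations of \eqref{E:Eq-system1} in the degenerate form and the definition of $\gamma_1,\gamma_2$, yielding a unique immersion $z:\mathcal{D}_0\to\R^4_1$ through $p_0$.

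The main work will be the bookkeeping of the integrability conditions: unlike the $K-H^2\neq 0$ case, here several coefficients of $\mathcal{B}$ vanish and $\nu$ is a function of $u$ alone, so I must check that the reduced system \eqref{E:Eq-fund-2} genuinely implies \emph{all} sixteen scalar compatibility equations for $\mathcal{F}$ (the other fourteen must collapse to identities). Once this is done, verifying the surface-theoretic conclusions is straightforward: $\langle z_u,z_v\rangle = -1/|\mu|$ identifies $f=1/\sqrt{|\mu|}$ as the metric function and shows $(u,v)$ are canonical isotropic parameters; reading off $\sigma$ from \eqref{E:Eq-system1} gives $H = \nu n_1$ with $D n_1 = 0$, hence parallel normalized mean curvature vector field; and the identity $K = \nu^2 = H^2$, i.e. $K - H^2 = 0$, follows from the reduced Gauss equation since $\lambda_2 = \mu_2 = 0$.
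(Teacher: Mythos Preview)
Your proposal is correct and follows essentially the same approach as the paper: the paper writes down the specialized frame system \eqref{E:Eq-system2} (your degenerate form of \eqref{E:Eq-system1} with $\lambda_2=\mu_2=0$), observes that \eqref{E:Eq-fund-2} furnishes its integrability, and then refers back to the proof of Theorem~\ref{T:Fundamental Theorem-1} for the remaining steps (Frobenius, the $\varphi_i$ argument for pseudo-orthonormality, and the second integration for $z$), exactly as you outline.
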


\begin{proof}
Let us   consider the following
system of partial differential equations for the unknown vector
functions $x = x(u,v), \, y = y(u,v), \, n_1 = n_1(u,v), \,n_2 = n_2(u,v)$
in $\E^4_1$:
\begin{equation}
\begin{array}{ll} \label{E:Eq-system2}
\vspace{2mm} 
x_u = \frac{1}{\sqrt{|\mu|}} \left(\gamma_1\, x + \lambda\, n_1  + \mu\, n_2\right)
& \qquad x_v = \frac{1}{\sqrt{|\mu|}} \left( -\gamma_2\, x - \nu\, n_1\right)\\
\vspace{2mm} 
y_u = \frac{1}{\sqrt{|\mu|}}\left(- \gamma_1\, y - \nu\, n_1 \right)
& \qquad y_v = \frac{1}{\sqrt{|\mu|}} \left( \gamma_2\, y\right)\\
\vspace{2mm} 
(n_1)_u = \frac{1}{\sqrt{|\mu|}}\left( - \nu\, x + \lambda\, y \right) &
\qquad (n_1)_v =  \frac{1}{\sqrt{|\mu|}}\left(- \nu\, y \right) \\
\vspace{1mm} 
(n_2)_u = \frac{1}{\sqrt{|\mu|}} \left(  \mu\, y \right) &
\qquad (n_2)_v =  0
\end{array}
\end{equation}
where  $\gamma_1 =  -(\sqrt{|\mu|})_u$ and $\gamma_2 =  -(\sqrt{|\mu|})_v$. It follows from equalities \eqref{E:Eq-fund-2} that the integrability conditions of system \eqref{E:Eq-system2} are fulfilled.

\vskip 1mm
Further, the proof follows the steps in the proof of Theorem \ref{T:Fundamental Theorem-1} and therefore, we are not going to give the details.

\end{proof}

\begin{rem}
We can introduce also canonical non-isotropic parameters  which in the case  $K - H^2 >0$ have the same geometric meaning as the canonical parameters of spacelike surfaces with parallel normalized mean curvature vector field in $\R^4_1$ and $\R^4$.
\end{rem}

\vskip 6mm 
\textbf{Acknowledgments:}
The  authors are partially supported by the National Science Fund, Ministry of Education and Science of Bulgaria under contract KP-06-N52/3.

\vskip 6mm


\begin{thebibliography}{99}

\bibitem{A-M-1}
Aleksieva Y., Milousheva V., \textit{Minimal Lorentz surfaces in pseudo-Euclidean 4\nobreakdash-space with neutral metric}.
J. Geom. Phys., \textbf{142}, (2019), 240--253.

\bibitem{Al-Pal}
Al\'{i}as L., Palmer B., \textit{Curvature properties of zero mean curvature surfaces in four dimensional Lorenzian space forms}. Math. Proc. Cambridge Philos. Soc. \textbf{124} (1998), 315--327.

\bibitem{Chen1} Chen B.-Y., \textit{Geometry of Submanifolds}, Marcel
Dekker, Inc., New York, 1973.

\bibitem{Chen-MM} Chen B.-Y., \textit{Surfaces with parallel normalized mean
curvature vector}, Monatsh. Math., \textbf{90}, no. 3 (1980), 185--194.

\bibitem{Chen-KJM} Chen B.-Y., \textit{Complete Classification of Lorentz
Surfaces with Parallel Mean Curvature Vector in Arbitrary Pseudo-Euclidean
Space}, Kyushu J. Math., \textbf{64}, no. 2 (2010), 261--279.

\bibitem{Chen-survey} Chen B.-Y., \textit{Submanifolds with parallel mean
curvature vector in Riemannian and indefinite space forms}, Arab J. Math.
Sci., \textbf{16}, no. 1 (2010), 1--46.

\bibitem{Chen}
Chen B.-Y., \textit{Pseudo-Riemannian geometry, $\delta$-invariants and
applications}. World Scientific Publishing Co. Pte. Ltd.,
Hackensack, NJ, 2011.

\bibitem{Fu-Hou} 
Fu Y. and Hou Z.-H., \textit{Classification of Lorentzian
surfaces with parallel mean curvature vector in pseudo-Euclidean spaces}, J.
Math. Anal. Appl., \textbf{371}, no. 1 (2010), 25--40.

\bibitem{G-K-1}
Ganchev G., Kanchev K., \textit{Explicit solving of the system of natural PDE’s of minimal surfaces in
the four-dimensional Euclidean space}, C. R. Acad. Bulg. Sci, 67 (5) (2014), 623--628.

\bibitem{G-K-2}
Ganchev G., Kanchev K., \textit{Explicit solving of the system of natural PDE’s of minimal space-like
surfaces in Minkowski space-time}, C. R. Acad. Bulg. Sci., 70 (6) (2017), 761--768.

\bibitem{G-K-3}
Ganchev G., Kanchev K., \textit{Canonical coordinates and natural equations for minimal time-like surfaces in $\mathbb{R}^4_2$}.
Kodai Math. J. , \textbf{43}\penalty0 (3), (2020), 524--572.

\bibitem{G-M-IJM}
Ganchev G.,  Milousheva V., \textit{Timelike surfaces with zero mean curvature in Minkowski 4-space}, Israel J. Math. 
\textbf{196} (2013), 413--433.

\bibitem{G-M-Fil}
Ganchev G.,  Milousheva V.,  \textit{Surfaces with parallel normalized mean curvature vector field in Euclidean or Minkowski 4-space}, Filomat Vol. 33, no. 4 (2019), 1135--1145.

\bibitem{Itoh}
Itoh T., M\textit{inimal surfaces in 4-dimensional Riemannian manifolds of constant curvature}, Kodai
Math. Sem. Rep., 23 (1971), 451--458.

\bibitem{K-M-1}
Kassabov O., Milousheva V., \textit{Weierstrass representations of Lorentzian minimal surfaces in ${\mathbb R}^4_2$}.
Mediterr. J. Math., \textbf{17}\penalty0 (6, 199), (2020). \newblock doi:\,10.1007/s00009-020-01636-x.

\bibitem{Lane}
Lane E.,  \textit{Projective differential geometry of curves and surfaces}.
University of Chicago Press, Chicago, 1932.

\bibitem{Lar}
Larsen J.C., \textit{Complex analysis, maximal immersions and metric singularities}, Monatsh. Math. \textbf{122} (1996), 105--156.

\bibitem{Little}
Little J., \textit{On singularities of submanifolds of higher dimensional Euclidean spaces}.
Ann. Mat. Pura Appl., IV Ser 83 (1969), 261--335.

\bibitem{Lund-Reg}
Lund F., Regge T., \textit{Unified approach to strings and vortices with soliton solutions}. Phys. Rev.  D, 14, no. 6 (1976), 1524--1536.

\bibitem{Sen}
S\c en R., \textit{Biconservative submanifolds with parallel normalized mean curvature vector field in Euclidean spaces}, Bull. Iran. Math. Soc. 48 (2022), 3185--3194.

\bibitem{Sen-Turg-JMAA}
S\c en R., Turgay N. C., \textit{On biconservative surfaces in 4-dimensional Euclidean space}, J. Math. Anal. Appl. 460 (2018), 565--581.

\bibitem{Sen-Turg-TJM}
S\c en R., Turgay N. C., \textit{Biharmonic PNMCV submanifolds in Euclidean 5-space}, Turk. J. Math. 47 (2023), 296--316.

\bibitem{Trib-Guad}
Tribuzy R., Guadalupe I., \textit{Minimal immersions of surfaces into 4-dimensional space forms}, Rend.
Sem. Mat. Univ. Padova, 73 (1985), 1--13.

\bibitem{Yau} 
Yau, S., \textit{Submanifolds with Constant Mean Curvature},
Amer. J. Math., \textbf{96} (1974), 346--366.

\end{thebibliography}
\end{document}